\DeclareMathOperator*{\esssup}{ess\,sup}
\def\tens#1{\pmb{\mathsf{#1}}}
\def\vec#1{\boldsymbol{#1}}
\def\abs#1{\left| #1 \right|}
\def\diver{\mathop{\mathrm{div}}\nolimits}
\def\R{\mathbb{R}}
\def\Rsym{\mathbb{R}^{d\times d}_{\textrm{sym}}}
\def\bx{\vec{x}}
\def\bef{\vec{f}}
\def\bW{\tens{W}}
\def\bT{\tens{T}}
\def\bC{\tens{C}}
\def\bD{\tens{D}}
\def\bG{\tens{G}}
\def\bF{\tens{F}}
\def\bB{\tens{B}}
\def\bV{\tens{V}}
\def\bU{\tens{U}}
\def\bL{\tens{L}}
\def\bR{\tens{R}}
\def\bI{\tens{I}}
\def\bH{\tens{H}}
\def\beps{\tens{\varepsilon}}
\def\bv{\vec{v}}
\def\b0{\vec{0}}
\def\bom{\vec{\omega}}
\def\bw{\vec{w}}
\def\bu{\vec{u}}
\newcommand{\dx}{\,\mathrm{d}x}
\newcommand{\dt}{\,\mathrm{d}t}
\newcommand{\dtau}{\,\mathrm{d}\tau}
\newcommand{\ddt}{\frac{\mathrm{d}}{\mathrm{d}t}}
\newtheorem{theorem}{Theorem}[section]
\newtheorem{lemma}[theorem]{Lemma}
\newtheorem{remark}[theorem]{Remark}
\journal{}
\begin{document}

\begin{frontmatter}

 \title{Existence and uniqueness of global weak solutions to strain-limiting viscoelasticity with Dirichlet boundary data\tnoteref{t1,t2}}
\tnotetext[t1]{M.~Bul\'{\i}\v{c}ek's work is supported by the project 20-11027X financed by GA\v{C}R.  M.~Bul\'{\i}\v{c}ek is a  member of the Ne\v{c}as Center for Mathematical Modeling.
V.~Patel is supported by the UK Engineering and Physical Sciences Research Council [EP/L015811/1].}

\author[Bulicek-address]{Miroslav Bul\'{i}\v{c}ek}
\ead{mbul8060@karlin.mff.cuni.cz}
\author[Ox-address]{Victoria Patel}
\ead{victoria.patel@maths.ox.ac.uk}
\author[Sengul-address]{Yasemin \c Seng\"{u}l\corref{cor1}}
\cortext[cor1]{Corresponding author. Tel: +90 216 483 9541}
\ead{yasemin.sengul@sabanciuniv.edu}
\author[Ox-address]{Endre S\"{u}li}
\ead{endre.suli@maths.ox.ac.uk}

\address[Bulicek-address]{Charles University, Faculty of Mathematics and Physics,  Sokolovsk\'{a}~83, 186~75 Praha~8, Czech Republic.}
\address[Ox-address]{Mathematical Institute, University of Oxford, Andrew Wiles Building, Woodstock Road, Oxford OX2 6GG, UK.}
\address[Sengul-address]{Faculty of Engineering and Natural Sciences, Sabanci University, Tuzla 34956, Istanbul, Turkey.}

\begin{abstract}
We consider a system of evolutionary equations that is capable of describing certain viscoelastic effects in linearized yet nonlinear models of solid mechanics. The essence of the paper is that the constitutive relation, involving the Cauchy stress, the small strain tensor and the symmetric velocity gradient, is given in an implicit form. For a large class of implicit constitutive relations we establish the existence and uniqueness of a global-in-time large-data weak solution. We then focus on the class of so-called limiting strain models, i.e., models for which the magnitude of the strain tensor is known to remain small a~priori, regardless of the magnitude of the Cauchy stress tensor. For this class of models, a new technical difficulty arises, which is that the Cauchy stress is only an integrable function
over its domain of definition, resulting in the underlying function spaces being nonreflexive and thus the weak compactness of bounded sequences of elements of these spaces is lost. Nevertheless, even for problems of this type we are able to provide a satisfactory existence theory, as long as the initial data have finite elastic energy and the boundary data fulfill natural compatibility conditions.
 \end{abstract}

\begin{keyword}
nonlinear viscoelasticity\sep strain-limiting theory \sep evolutionary problem \sep global existence \sep weak solution \sep regularity \MSC{35M13\sep 35K99 \sep 74D10 \sep 74H20}
\end{keyword}

\end{frontmatter}

\setcounter{equation}{0}
\numberwithin{equation}{section}

\section{Introduction}

This paper is devoted to the study of the following nonlinear system of partial differential equations (PDEs). We assume that  $\Omega \subset \mathbb{R}^{d}$ is a given bounded open domain and we denote the parabolic cylinder by $Q:=(0,T)\times \Omega$ and its spatial boundary by $\Gamma:=(0,T)\times \partial \Omega$, where $T>0$ is the length of the time interval of interest. For given data $\bG:\R^{d\times d}_{sym}\to \R^{d\times d}_{sym}$, $\bef:Q\to \R^d$, $\bu_I:\Omega\to \R^d$, $\bv_0:\Omega\to \R^d$, $\bu_{\Gamma}: \Gamma \to \R^d$ and \(\alpha\), $ \beta>0$,  we seek a couple  $(\bu, \bT):Q \to \R^d \times \R^{d\times d}_{sym}$ satisfying
\begin{subequations}\label{model}
\begin{alignat}{2}
 \partial^2_{tt}\bu - \diver \bT & = \bef &&\quad \textrm{in $Q$},\label{linear-moment}\\
\label{cons-law}
\alpha \beps(\bu) + \beta \beps(\partial_t\bu) &= \bG(\bT)&&\quad \textrm{in $Q$},\\
\bu(0)  = \bu_{I},\quad \partial_t\bu(0) & = \bv_{0} &&\quad \textrm{in $\Omega$},\label{init-data}\\
\bu & = \bu_{\Gamma} &&\quad \text{on $\Gamma$}.\label{bound-data}
\end{alignat}
\end{subequations}
Here, \eqref{linear-moment} represents an approximation\footnote{In fact, the density $\varrho$ of the solid should also appear in \eqref{linear-moment}. In principle $\varrho$ could be a function of space and time and should satisfy the balance of mass equation. Since we are dealing with small strains here, i.e. the case when the deformation of the solid is small, under the assumption that the solid is homogeneous at initial time $t=0$, we can consider the density to be equal to a constant for all times $t\in(0,T)$. We shall therefore scale the density to be identically equal to one for simplicity; see also the discussion in~\cite{BuMaRa12}. We note however that under suitable assumptions it is not too difficult to extend the results presented herein to the case of variable density.} of the balance of linear momentum, where $\bef$ is the density of the external body forces, $\bu$ is the displacement, $\bT$ denotes the Cauchy stress tensor and the operator $\diver$ denotes the standard divergence operator with respect to the spatial variables $x_1,\ldots, x_d$.
The Cauchy stress tensor $\bT$ is implicitly related to the small strain tensor $\beps(\bu):=\frac12 (\nabla \bu + (\nabla \bu)^{\rm T})$ and to the symmetric velocity gradient $\beps(\partial_t\bu):= \partial_t(\beps(\bu))$ via \eqref{cons-law}. The initial displacement and the initial velocity are given by \eqref{init-data} and the Dirichlet boundary condition for the displacement is represented by \eqref{bound-data}. A more detailed discussion concerning the relevance of \eqref{model} to problems in viscoelasticity is contained in Section~\ref{physics}.

It remains to specify the form of the implicit constitutive law \eqref{cons-law}. The minimal assumptions imposed on the mapping $\bG$ throughout the paper are the following. We assume that the function $\bG:\Rsym \to \Rsym$ is a continuous mapping such that, for some $p\in [1,\infty)$, some positive constants $C_1$ and $C_2$, and for all \(\bT\), $\bW \in \Rsym$, the following inequalities hold:
\begin{align}
\tag{A1} \big(\bG(\bT) - \bG(\bW)\big) \cdot (\bT - \bW) &\ge 0,\label{A1}\\ 
\tag{A2} \bG(\bT) \cdot \bT &\geq C_{1} |\bT|^{p} - C_{2},\label{A2}\\
\tag{A3} |\bG(\bT)| &\leq C_{2} (1 + |\bT|)^{p-1},\label{A3}
\end{align}
where $| \cdot |$ stands for the usual Frobenius matrix norm.
Assumptions \eqref{A1}--\eqref{A3} are sufficient for the existence and uniqueness of a weak solution provided that $p\in (1,\infty)$. For $p =1$, however, we must impose a more restrictive assumption due to the lack of compactness experienced when working in \( L^1\). Namely, we will assume that there exists a strictly convex function $\phi \in \mathcal{C}^2(\R_+; \R_+)$ such that $\phi(0)=\phi'(0)=0$, $|\phi''(s)|\le C(1+s)^{-1}$ for every \(s\in \mathbb{R}_+\), and for all $\bT\in \Rsym$ there holds
\begin{align}\tag{A4}\label{A4}
\bG(\bT)=\frac{\phi'(|\bT|) \bT}{|\bT|}.
\end{align}

In order to simplify the exposition and avoid nonessential technical details concerning the choice of appropriate function spaces that admit suitable trace theorems, we shall assume that there exists a function $\bu_0:Q \to \R^d$ fulfilling, in an appropriate sense, the initial and boundary conditions
\[
\begin{aligned}
\bu_0(0)&=\bu_I &&\text{in }\Omega,\\
\partial_t \bu_0(0)&=\bv_0 &&\text{in } \Omega,\\
\bu_0&=\bu_{\Gamma} &&\text{on } \Gamma.
\end{aligned}
\]
We shall henceforth formulate  all assumptions on the initial and boundary data in terms of $\bu_0$, rather than $\bu_I$, $\bv_0$ and $\bu_\Gamma$. We note that since the function spaces for $\bu_0$ stated below are the same as those for the weak solution $\bu$,  it is in fact necessary that such a $\bu_0$ exists. Otherwise our construction of a weak solution would not be possible.

\subsection{Statement of the main results}\label{mathematics}

First, we formulate our result for the case when $p>1$.
\begin{theorem}\label{T1}
Let  $p'>2d/(d+2)$, let $\bG$ satisfy \eqref{A1}, \eqref{A2} and \eqref{A3}, and let  \(\alpha\), $ \beta > 0$ be arbitrary. Assume that the data satisfy the following hypotheses:
\begin{equation}
\begin{split}
\bu_0&\in W^{1,p'}(0,T; W^{1,p'}(\Omega;\R^d))\cap W^{2,p}(0,T; (W^{1,p'}_0(\Omega;\R^d))^*)\cap \mathcal{C}^1([0,T];L^2(\Omega;\R^d)),\\
\bef&\in L^{p}(0,T; (W^{1,p'}_0(\Omega;\R^d))^*).
\end{split}\label{data-as}
\end{equation}
Then, there exists a  couple $(\bu, \bT)$ fulfilling
\begin{align}
\bu &\in \mathcal{C}^{1}([0, T]; L^{2}(\Omega;\R^d))\cap W^{1, p'}(0, T; W^{1, p'}(\Omega;\R^d))\cap  W^{2,{p}}(0, T; (W_{0}^{1,p' }(\Omega;\R^d))^{*}), \label{FSu}\\
\bT &\in L^{p}(0, T; L^{p}(\Omega;\Rsym))\label{FST}
\end{align}
and solving \eqref{model} in the following sense:
\begin{align}\label{WF}
\langle \partial_{tt}\bu, \bw \rangle + \int_{\Omega} \bT \cdot \nabla \bw &= \langle \bef, \bw \rangle && \forall\, \bw \in W_{0}^{1,p'}(\Omega;\R^d),\quad
\textrm{for a.e. }\,t \in (0, T),
\\
\label{T-const}
\alpha \beps(\bu) + \beta \partial_{t} \beps(\bu)&=\bG(\bT) &&\textrm{a.e. in }Q,
\end{align}
and
\begin{equation}
\bu-\bu_0 =\b0 \quad \textrm{ a.e. on }\Gamma\qquad \textrm{and}\qquad  \bu(0)-\bu_0(0)=\partial_t \bu(0)-\partial_t\bu_0(0)=\b0\quad \textrm{ a.e. in }\Omega. \label{bcint}
\end{equation}
Furthermore, the function \( \bu \) is unique. If, additionally, the mapping \( \bG \) is strictly monotonic, then \( \bT \) is also unique.
\end{theorem}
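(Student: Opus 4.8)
The plan is to construct the solution by a Galerkin-type approximation combined with a regularization of the constitutive law, pass to the limit using monotonicity (Minty's trick), and finally establish uniqueness by an energy estimate on the difference of two solutions. For the existence part, first I would choose a basis of $W^{1,p'}_0(\Omega;\R^d)$ adapted to the second-order-in-time structure of \eqref{linear-moment}, and work with the shifted unknown $\bv:=\bu-\bu_0$ so that the boundary and initial conditions \eqref{bcint} are homogeneous. Since $\bG$ need not be invertible, I would additionally regularize: replace $\bG$ by $\bG_\varepsilon(\bT):=\bG(\bT)+\varepsilon |\bT|^{p-2}\bT$ (or equivalently solve for $\bT$ via a maximal monotone graph argument) so that on the Galerkin level $\bT$ is uniquely determined from $\alpha\beps(\bu)+\beta\partial_t\beps(\bu)$. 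Testing \eqref{linear-moment} with $\partial_t\bu$ and using \eqref{cons-law} gives the natural energy identity: the kinetic energy $\tfrac12\|\partial_t\bu\|_{L^2}^2$, the elastic term $\tfrac{\alpha}{2\beta}$-weighted quantity $\int_\Omega \bG(\bT)\cdot\beps(\bu)$ handled via a primitive, and the dissipation $\tfrac1\beta\int_Q \bG(\bT)\cdot\bT$, which by \eqref{A2} controls $\|\bT\|_{L^p(Q)}^p$. The condition $p'>2d/(d+2)$ ensures $W^{1,p'}(\Omega)\hookrightarrow L^2(\Omega)$, which is what is needed to absorb the forcing term $\langle\bef,\partial_t\bu\rangle$ and to make sense of $\partial_{tt}\bu\in L^p(0,T;(W^{1,p'}_0)^*)$ via comparison in \eqref{linear-moment}.

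With these uniform bounds I would extract weakly/weakly-$*$ convergent subsequences: $\bu\rightharpoonup\bu$ in $W^{1,p'}(0,T;W^{1,p'}(\Omega;\R^d))$, $\partial_{tt}\bu\rightharpoonup\partial_{tt}\bu$ in $L^p(0,T;(W^{1,p'}_0)^*)$, and $\bT\rightharpoonup\bT$ in $L^p(Q;\Rsym)$ (here $p>1$ is essential for reflexivity — this is exactly the obstruction that forces the separate treatment under \eqref{A4} when $p=1$). The Aubin--Lions lemma then gives strong convergence of $\partial_t\bu$ in, say, $L^2(Q;\R^d)$ and of $\bu$ in $C([0,T];L^2(\Omega;\R^d))$, which secures the initial conditions and lets me pass to the limit in the linear equation \eqref{WF}. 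The remaining difficulty is identifying the weak limit $\bT$ with a solution of the constitutive relation \eqref{T-const}, i.e.\ showing $\alpha\beps(\bu)+\beta\partial_t\beps(\bu)=\bG(\bT)$ a.e.; this is where the main work lies. The standard Minty--Browder monotonicity argument applies: from \eqref{A1}, for any test matrix field $\bW$ one has $\int_Q(\bG(\bT_n)-\bG(\bW))\cdot(\bT_n-\bW)\ge 0$; the delicate point is that $\bG(\bT_n)=\alpha\beps(\bu_n)+\beta\partial_t\beps(\bu_n)$ does not converge strongly, so I cannot directly pass to the limit in the cross term $\int_Q \bG(\bT_n)\cdot\bT_n$. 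The remedy is to use the energy identity to show $\limsup_n\int_Q\bG(\bT_n)\cdot\bT_n\le\int_Q(\alpha\beps(\bu)+\beta\partial_t\beps(\bu))\cdot\bT$; this requires proving that the limiting pair satisfies its own energy equality (no gap), which in turn uses the strong $L^2$ convergence of $\partial_t\bu_n$ together with lower semicontinuity, and a careful treatment of the elastic term $\int_\Omega\beps(\bu(T))\cdot$(primitive). Once $\limsup\int_Q\bG(\bT_n)\cdot\bT_n\le\int_Q\bG(\bT)\cdot\bT$ is available, Minty gives $\bG(\bT)=\alpha\beps(\bu)+\beta\partial_t\beps(\bu)$, and the regularization parameter $\varepsilon$ is sent to zero by the same monotonicity structure (noting $\varepsilon\int_Q|\bT_\varepsilon|^p\to 0$ from the energy bound).

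For uniqueness of $\bu$, suppose $(\bu_1,\bT_1)$ and $(\bu_2,\bT_2)$ are two solutions with the same data, set $\bu:=\bu_1-\bu_2$, $\bT:=\bT_1-\bT_2$. Subtracting the weak formulations \eqref{WF} and testing with $\partial_t\bu$ (which is admissible after the usual Steklov-averaging/regularization in time, using $\partial_{tt}\bu\in L^p(0,T;(W^{1,p'}_0)^*)$ and $\partial_t\bu\in L^{p'}(0,T;W^{1,p'}_0)$) yields
\begin{equation}
\frac12\frac{\ddd}{\ddt}\|\partial_t\bu(t)\|_{L^2(\Omega)}^2 + \int_\Omega \bT\cdot\beps(\partial_t\bu) = 0.
\end{equation}
From the constitutive relations, $\alpha\beps(\bu)+\beta\beps(\partial_t\bu)=\bG(\bT_1)-\bG(\bT_2)$, so $\beps(\partial_t\bu)=\tfrac1\beta(\bG(\bT_1)-\bG(\bT_2))-\tfrac{\alpha}{\beta}\beps(\bu)$; substituting gives
\begin{equation}
\frac12\frac{\ddd}{\ddt}\|\partial_t\bu(t)\|_{L^2(\Omega)}^2 + \frac1\beta\int_\Omega(\bG(\bT_1)-\bG(\bT_2))\cdot(\bT_1-\bT_2) = \frac{\alpha}{\beta}\int_\Omega\beps(\bu)\cdot\bT.
\end{equation}
The second term on the left is nonnegative by \eqref{A1}. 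To close the estimate I would write $\int_\Omega\beps(\bu)\cdot\bT = \int_\Omega\beps(\bu)\cdot(\bG(\bT_1)-\bG(\bT_2))\cdot(\text{via inverting, when }\bG\text{ injective})$ — more robustly, I would instead use a second energy relation obtained by testing with $\bu$ itself, or introduce $\tfrac{\ddd}{\ddt}\tfrac{\alpha}{2\beta}\|\beps(\bu)\|^2$ by pairing the constitutive law with $\beps(\partial_t\bu)$; combining these and using \eqref{A1} plus Korn's inequality and Gronwall's lemma forces $\partial_t\bu\equiv 0$ and $\beps(\bu)\equiv 0$, whence $\bu\equiv 0$ by the boundary condition. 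If $\bG$ is strictly monotone, then from $\bG(\bT_1)=\bG(\bT_2)$ (which follows once $\beps(\bu)=\beps(\partial_t\bu)=0$) we conclude $\bT_1=\bT_2$ pointwise a.e., giving uniqueness of the stress as well. The main obstacle throughout is the monotonicity-based limit identification of $\bT$ in the non-Hilbert setting, specifically establishing the limiting energy (in)equality with no loss; everything else is by now fairly standard.
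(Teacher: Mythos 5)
Your broad strategy (Galerkin approximation, regularization of $\bG$ to make it invertible, Minty-type monotonicity to identify the nonlinearity, and an energy estimate for uniqueness) is the same as the paper's. However, the central technical device is missing, and its absence leaves a concrete gap at each of the three stages you describe. When you test \eqref{linear-moment} with $\partial_t\bu$ alone and use \eqref{cons-law} to convert $\bT\cdot\partial_t\beps(\bu)$ into $\tfrac1\beta\,\bG(\bT)\cdot\bT-\tfrac\alpha\beta\,\bT\cdot\beps(\bu)$, the last cross term $\int_\Omega\bT\cdot\beps(\bu)$ is \emph{not} a time derivative of anything under \eqref{A1}--\eqref{A3}: there is no potential linking $\bT$ to $\beps(\bu)$ in the implicit viscoelastic relation, so the ``handled via a primitive'' step does not exist. (The potential structure you invoke is only available under the extra hypothesis \eqref{A4} in Theorem~\ref{T4.1}, and even there the primitive is taken in $\bT$, not in $\beps$, and the dissipation term it produces is $(\bT-\bT_0)\cdot(\bG(\bT)-\bG(\bT_0))$, which does not by itself yield the required $\|\bT\|_{L^p}$ bound from \eqref{A2}.) The paper resolves this by \emph{also} testing with $\bu-\bu_0$, i.e.\ with the combination $\alpha(\bu-\bu_0)+\beta\partial_t(\bu-\bu_0)$, which makes the full constitutive combination $\alpha\beps(\bu)+\beta\partial_t\beps(\bu)=\bG(\bT)$ appear paired against $\bT$, so \eqref{A2} delivers $\|\bT\|^p_{L^p}$ directly and the cross term cancels (see \eqref{test2}--\eqref{test3}).

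The same two-test structure is what closes the other two steps. Your Minty argument needs $\limsup_n\int_Q\bT^n\cdot\bigl(\alpha\beps(\bu^n)+\beta\partial_t\beps(\bu^n)\bigr)\le\int_Q\bT\cdot\bigl(\alpha\beps(\bu)+\beta\partial_t\beps(\bu)\bigr)$; you cannot get this from the kinetic-energy identity alone, because the corresponding equality for the limiting pair again has the $\int\bT\cdot\beps(\bu)$ cross term — you need the two identities \eqref{id1l}--\eqref{id2l} and their limit counterparts \eqref{id1lf}--\eqref{id2lf} to combine. Similarly for uniqueness: testing the difference equation with $\partial_t\bu$ produces $\int_\Omega(\bT_1-\bT_2)\cdot\partial_t\beps(\bu)$, which you cannot sign; only the sum $\int_\Omega(\bT_1-\bT_2)\cdot\bigl(\alpha\beps(\bu)+\beta\partial_t\beps(\bu)\bigr)=\int_\Omega(\bT_1-\bT_2)\cdot(\bG(\bT_1)-\bG(\bT_2))$ is nonnegative by \eqref{A1}, and getting it requires the additional test with $\bu$. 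You half-flag this as a possible remedy in the uniqueness paragraph but do not recognise that it is forced there and is also indispensable for the a priori bound and for Minty; the fallback you propose (pairing the constitutive law with $\partial_t\beps(\bu)$) again produces an uncontrollable term $(\bG(\bT_1)-\bG(\bT_2))\cdot\partial_t\beps(\bu)$. A small additional remark: using a basis of $W^{1,p'}_0(\Omega;\R^d)$ directly is awkward when $p'\neq2$; the paper works with an $L^2$-orthonormal basis of $W^{2d,2}_0(\Omega;\R^d)$, which is a Hilbert space embedding into $W^{1,p'}_0$, precisely so that the Galerkin projection is continuous and the $L^2$-orthogonality of the basis functions can be used to reduce \eqref{G1} to a Carath\'eodory ODE system.
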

Before proceeding, we will first comment on the assertions of Theorem~\ref{T1}. The proof of Theorem~\ref{T1} is based on the relevant a~priori estimates, for which the assumption \eqref{data-as} seems to be both optimal and  minimal. The function spaces considered  in \eqref{FSu}, \eqref{FST} correspond to the structural assumptions imposed on $\bG$, namely the coercivity assumption \eqref{A2} and the growth condition \eqref{A3}. Since $p>1$, we have a ``standard" function space setting, so the nonlinearity in \eqref{T-const} can be identified by using a modification of Minty's method. Theorem~\ref{T1} can also be understood as an extension of the results established in ~\cite{BuMaRa12}; similarly as here, the authors of~\cite{BuMaRa12} treated a viscoelastic solid model of generalized Kelvin--Voigt type, but they considered a constitutive relation for the Cauchy stress of the following explicit form:
\begin{align*}
\bT&=\bT_{el}(\beps(\bu)) + \bT_{vis}(\partial_t\beps(\bu)) \qquad \textrm{a.e. in }Q.
\end{align*}
The regularity results for such models are available in~\cite{BuKaSt13}. It is remarkable that while \eqref{T-const} can be fully justified from the physical point of view via implicit constitutive theory, see \cite{Raj-03}, the above explicit form $\bT=\bT_{el} + \bT_{vis}$ can be justified for particular choices of $\bT_{el}$ and $\bT_{vis}$ only.

In contrast with the case of $p>1$, almost none of what was said above applies in the case $p=1$, or for the limit, as $p\to 1_+$, of the sequence of solutions constructed in Theorem~\ref{T1}. Indeed, for similar models in the purely steady elastic setting, it was demonstrated  in \cite{BeBuMaSu17} that $\bT$ is, in general, a Radon measure and therefore one can hardly consider \eqref{T-const} pointwise in $Q$. Nevertheless, it was  shown there that under some structural assumptions on $\bG$ (corresponding to \eqref{A4}), one may hope for $\bT$ to be integrable. A similar situation was also studied in~\cite{BeBuGm20} but with $p\to \infty$, which, in general, leads to solutions $\bu$ in $BV$ spaces. However,  under a structural assumption related to~\eqref{A4}, one can again overcome such difficulties and show the existence of a solution that belongs to a Sobolev space.

A similar situation can be expected in our setting  when $p=1$. Therefore, in order to avoid difficulties associated with the interpretation of $\partial_{tt}\bu$ and the interpretation of the sense in which the initial data are attained, we assume here, for simplicity, that the right-hand side $\bef\in L^2(Q;\mathbb{R}^d)$. 

Thus, inspired by \cite{BeBuMaSu17}, if \( p =1\) we assume in addition to \eqref{A1}--\eqref{A3}  that we have \eqref{A4}.
%
%
%
%
It then follows from the structural assumptions that for all $s\in \R_+$ we have
\begin{equation*}
\begin{split}
\frac{C_1 s}{2} - C_2 &\le \phi(s)\le C_2 s,\\
0&\le \phi'(s) \le C_2.
\end{split}
\end{equation*}
Since $\phi$ is convex, we deduce that there exists an $L>0$ such that
\begin{equation}\label{dfL}
L:=\lim_{s\to \infty} \phi'(s) \ge \phi'(t) \qquad \forall \,t\in \R.
\end{equation}
The number $L$ plays an essential role in the subsequent analysis, in particular in the assumptions on the initial and boundary data. Indeed, thanks to \eqref{A4}, we see that
\begin{equation}\label{dfL2}
L=\lim_{|\bW|\to \infty} |\bG(\bW)|\ge |\bG(\bT)| \qquad \forall \,\bT\in \Rsym.
\end{equation}
Hence, if \eqref{cons-law} is satisfied then we must necessarily have
\begin{equation}\label{nec}
|\alpha \beps(\bu) + \beta \partial_t \beps(\bu)|\le L \qquad \textrm{ a.e. in }Q.
\end{equation}
Consequently, if such a $\bu$ should exist then it is natural to assume the same requirement as \eqref{nec} also for the initial and boundary data, that is, we must have
\begin{equation}\label{nec0}
|\alpha \beps(\bu_0) + \beta \partial_t \beps(\bu_0)|\le L \qquad \textrm{ a.e. in }Q.
\end{equation}
In fact, we require in the existence analysis that \eqref{nec0} is satisfied with a strict inequality sign; such a condition is called the {\it  safety strain condition}.

\tikzstyle{dot}=[draw,fill=white,circle,inner sep=0pt,minimum size=4pt]
\def\deltazero{
\begin{tikzpicture}[scale=2]
    \footnotesize
    \draw[ultra thin] (-0.2,  0.0) -- (3.2, 0.0) node[below]{$s$};
    \draw[ultra thin] ( 0.0, -0.2) -- (0.0, 3.2) node[left ]{$\phi(s)$};
    \draw[ultra thin] ( -0.05, 1) -- (0.05, 1);
       \draw[variable=\t,domain=0:3.2,thick,samples=400]
      plot (\t         ,{\t - ln(\t +1)}) ;
       \draw[variable=\t,domain=0:3.2,thick,dashed,samples=400]
      plot ({\t}         ,{(1+(\t)^(2))^(1/2)-1} ) ;
 \end{tikzpicture}
}
\def\deltaone{
\begin{tikzpicture}[scale=1.5]
    \footnotesize
    \draw[ultra thin] (-0.2,  0.0) -- (2.2, 0.0) node[below]{$\abs{\bG(\bT)}$};
    \draw[ultra thin] ( 0.0, -0.2) -- (0.0, 2.2) node[left ]{$|\bT|$};
      \draw[variable=\t,domain=0:2.2,dotted,samples=400]
      plot ({\t}         ,{1}) (-0.1,1) node{1} ;
 \draw[variable=\t,domain=0:2,thick,dashdotted,samples=400]
      plot ({\t},{\t/(pow(1+pow(\t,10),1/10))}) ;
        \draw[variable=\t,domain=0:2,thick,samples=400]
      plot ({\t}         ,{(\t)/(1+\t)} ) ;
       \draw[variable=\t,domain=0:2,thick,dashed,samples=400]
      plot ({\t}         ,{(\t)/((1+(\t)^(2))^(1/2))} ) ;
  \end{tikzpicture}
}

\begin{theorem}\label{T4.1}
For some strictly convex $\phi\in \mathcal{C}^2(\R_+; \R_+)$, let $\bG$ satisfy \eqref{A1}--\eqref{A4} with $p=1$. Assume that the data satisfy the following hypotheses:
\begin{equation}
\begin{split}
\bu_0&\in W^{1,\infty}(0,T; W^{1,2}(\Omega;\R^d))\cap W^{2,1}(0,T;L^2(\Omega;\R^d)),\\
\bef&\in L^{2}(0,T; L^2(\Omega;\R^d)),
\end{split}\label{data-as2}
\end{equation}
with the safety strain condition
\begin{equation}\label{compt}
\|\alpha \beps(\bu_0) + \beta\partial_t \beps(\bu_0)\|_{L^{\infty}(Q; \Rsym)}< L
\end{equation}
and the following bound holds for every \( \delta>0 \):
\begin{equation}\label{compt2}
\esssup_{(t,x)\in (\delta,T)\times \Omega} |\partial_{tt} \beps (\bu_0(t,x))|< \infty.
\end{equation}
Then, there exists a unique couple $(\bu,\bT)$ fulfilling
\begin{align}
\bu &\in  W^{1,\infty}(0, T; W^{1, 2}(\Omega;\R^d))\cap \mathcal{C}^1([0,T]; L^2(\Omega;\mathbb{R}^d))\cap  W^{2,{2}}_{loc}(0, T; L^2(\Omega;\R^d) ), \label{FSu2}\\
\beps(\bu) &\in  L^{\infty}(Q;\Rsym), \label{FSe2}\\
\partial_t\beps(\bu) &\in  L^{\infty}(Q;\Rsym), \label{FSe22}\\
\bT &\in L^{1}(0, T; L^{1}(\Omega;\Rsym))\label{FST2}
\end{align}
and satisfying
\begin{align}\label{WF2}
\int_{\Omega}\partial_{tt}\bu \cdot \bw + \bT \cdot \nabla \bw\dx &= \int_{\Omega}\bef \cdot \bw \dx &&\forall \, \bw \in W_{0}^{1,\infty}(\Omega;\R^d),\quad \text{for a.e. }\,t \in (0, T),\\
\label{T-const2}
\alpha \beps(\bu) + \beta \partial_{t} \beps(\bu)&=\bG(\bT) &&\textrm{a.e. in }Q,
\end{align}
and
\begin{equation}
\bu-\bu_0 =\b0 \quad \textrm{ a.e. on }\Gamma\qquad \textrm{and}\qquad  \bu(0)-\bu_0(0)=\partial_t \bu(0)-\partial_t\bu_0(0)=\b0 \quad \textrm{ a.e. in }\Omega. \label{bcint2}
\end{equation}
\end{theorem}


This theorem answers the question of existence of weak solutions to the problem under the assumptions \eqref{A1}--\eqref{A3} when $p\to 1_+$ and therefore provides an existence result for limiting strain models for which the symmetric displacement gradient and symmetric velocity gradient remain bounded; see Section~\ref{physics} for the physical background and the importance of the model. We note that a very similar existence result was established recently in~\cite{BuPaSeSu20}; there are however certain essential differences, which make the results of the present paper much stronger. First, in~\cite{BuPaSeSu20} the authors only consider the prototypical model
\begin{align}\label{A4p}
\bG(\bT):=\frac{\bT}{(1+|\bT|^q)^{\frac{1}{q}}},
\end{align}
while we are able to cover here a more general class of models under hypothesis \eqref{A4}. The corresponding potential $\phi$ (whose existence is assumed in \eqref{A4}) is, for the model \eqref{A4p}, given by
$$
\phi(s):=\int_0^s \frac{t}{(1+t^q)^{\frac{1}{q}}}\dt,\qquad s \in \mathbb{R}_+.
$$
The role of the parameter $q$ in \eqref{A4p} is indicated in Fig.~\ref{Fig1}.
\begin{figure}[h]
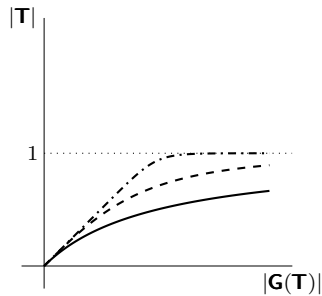

    \centering
     \deltaone{}
    \caption{Dependence of $|\bG|$ on $|\bT|$ for the prototype model~\eqref{A4p}. The three curves correspond to $q=1$ (solid curve), $q=2$ (dashed curve) and $q=10$ (dash-dotted curve). Clearly, $|\bT|$ tends to $1$ more rapidly with increasing~$q$.}
    \label{Fig1}
\end{figure}
Second, the paper~\cite{BuPaSeSu20} is concerned with the spatially periodic setting, which simplifies the analysis in an essential way, most notably with regard to the derivation of the relevant a~priori estimates. Finally, in~\cite{BuPaSeSu20} the initial data are assumed to be quite regular (they are supposed to belong to the Sobolev space $W^{k,2}(\Omega;\mathbb{R}^d)$ with $k>\frac{d}{2}$), which is related to the choice of the method used therein to prove the existence of a weak solution. In this paper we do not require such strong regularity of the initial data. Nevertheless, since in our context here it is difficult to describe the correct space-time trace spaces, because we are dealing with $L^{\infty}$-type spaces and symmetric gradients, and since we want to state the result in its full generality (so as to be able to admit time-dependent boundary data), we do assume a certain compatibility condition via an a~priori prescribed space-time function $\bu_0$ that we use in order to impose the initial and boundary conditions. Indeed, the existence of $\bu_0$ satisfying the safety strain condition \eqref{compt} is necessary for the existence of a solution. Next, the assumption \eqref{data-as2}$_1$, requiring temporal regularity of $\bu_0$,  is required in order to ensure that $\bu_0$ and $\partial_t \bu_0$ have meaningful traces at time $t=0$. Finally, the assumption~\eqref{compt2} prescribes the required temporal smoothness of the boundary data, but since it only involves $t \in (\delta, T)$ with $\delta>0$, it has nothing to do with either the regularity of the initial condition or its compatibility with the boundary data.   We give several examples for simplified settings in the following remark.
\begin{remark}
We discuss two cases of boundary and initial data from \eqref{init-data}--\eqref{bound-data} for which it is easy to construct a function $\bu_0$ that satisfies the assumptions \eqref{data-as2}--\eqref{compt2}. 

\bigskip

\noindent \textsf{Boundary data independent of time.} Suppose that $\bu_{\Gamma}$ is independent of time and $\bu_I\in W^{1,2}(\Omega;\R^d)$ satisfies the compatibility condition $\bu_I|_{\partial \Omega}=\bu_{\Gamma}$. Because the
boundary data are independent of time, it is natural to assume that $\bv_0 \in W^{1,2}_0(\Omega; \R^d)$, where
\begin{equation}\label{case1}
\|\alpha \beps(\bu_I) +\beta \beps(\bv_0)\|_{L^\infty(\Omega;\mathbb{R}^{d\times d}_{sym})} <L.
\end{equation}
Then we can set
$$
\bu_0(t,x):=\mathrm{e}^{-\frac{\alpha t}{\beta}}\bu_I(x) +\frac{\alpha \bu_I(x) + \beta \bv_0(x)}{\alpha}(1-\mathrm{e}^{-\frac{\alpha t}{\beta}}).
$$
Indeed, a direct computation yields that
$$
\partial_t  \bu_0(t,x)= \bv_0(x)\,\mathrm{e}^{-\frac{\alpha t}{\beta}},
$$
and thus, $\bu_0(0,x)=\bu_I(x)$, $\partial_t \bu_0(0,x)=\bv_0(x)$ for $x \in \Omega$ and $\bu_0|_{\Gamma} = \bu_{\Gamma}$. Moreover,
$$
\alpha \beps(\bu_0) + \beta \partial_t \beps(\bu_0)= \alpha \beps(\bu_I) + \beta \beps(\bv_0).
$$
Consequently, $\bu_0$ satisfies \eqref{compt} provided \eqref{case1} holds. The validity of \eqref{compt2} is obvious.

\bigskip

\noindent \textsf{Time-dependent boundary data.} In this setting, we a~priori assume the existence of some $\tilde{\bu}$ such that $\tilde{\bu}(0,x)=\bu_I(x)$ for $x \in \Omega$ and $\tilde{\bu}|_{\Gamma}=\bu_{\Gamma}$. In addition, it is natural to assume the compatibility condition $\bv_0(\cdot)=\partial_t \bu_{\Gamma}(0,\cdot)$ on $\partial \Omega$. We adopt the following assumption on $\tilde{\bu}$ and $\bv_0$:
\begin{equation}\label{case2}
\|\alpha \beps(\tilde{\bu})+ \beta(\partial_t\beps(\tilde{\bu})-\partial_t \beps(\tilde{\bu}(0,\cdot)) +\beps(\bv_0(\cdot)))\|_{L^\infty(Q;\mathbb{R}^{d \times d}_{sym})} <L.
\end{equation}
We define
$$
\bu_0(t,x):= \tilde{\bu}(t,x)+\frac{\beta (\bv_0(x)-\partial_t \tilde{\bu}(0,x))}{\alpha}(1-\mathrm{e}^{-\frac{\alpha t}{\beta}}).
$$
Clearly, $\bu_0(0,x)=\tilde{\bu}(0,x)=\bu_I(x)$ for $x \in \Omega$ and $\bu_0=\bu_{\Gamma}$ on $\Gamma$. The time derivative of $\bu_0$ is
$$
\partial_t \bu_0(t,x)= \partial_t\tilde{\bu}(t,x)+(\bv_0(x)-\partial_t \tilde{\bu}(0,x))\,\mathrm{e}^{-\frac{\alpha t}{\beta}}
$$
and thus $\partial_t \bu_0(0,x)= \bv_0(x)$ for $x \in \Omega$. In addition, since
$$
\alpha \beps(\bu_0) + \beta \partial_t \beps (\bu_0)=\alpha \beps(\bu_I)+ \beta(\partial_t\beps(\bu_I)-\partial_t \beps(\bu_I(0)) +\beps(\bv_0)),
$$
we see that \eqref{compt} is equivalent to \eqref{case2}. The assumption \eqref{compt2} is then only related to our extension of the boundary data inside of $\Omega$ and the temporal regularity of the boundary data.

\end{remark}


\subsection{Relevance to the modelling of viscoelastic solids}\label{physics}

With these results in mind, we will now discuss the importance of such problems.
We often encounter materials exhibiting viscoelastic response. By definition, viscoelasticity involves the material response of both elastic solids and viscous fluids, which can be modelled linearly or nonlinearly (see \cite{Sengul-visco-review} for an extensive overview). On the other hand, it is well-known that implicit constitutive theories allow for a more general structure in modelling than explicit ones (cf. \cite{Raj-03}, \cite{Raj-06}), where the strain could be given as a function of the stress. Indeed, this is the case in our constitutive relation \eqref{cons-law} in system \eqref{model}. Rajagopal's main contribution \cite{Raj-10} to the theory was to show that a nonlinear relationship between the stress and the strain can be obtained after linearizing the strain. The relation \eqref{cons-law} was first obtained by Erbay and \c{S}eng\"{u}l in \cite{Erbay-Sengul}  as a result of the linearization of the relation between the stress and the strain tensors under the assumption that the magnitude of the strain is small.  For models of this type it is possible that once the magnitude of the strain has reached a certain limiting value (as is the case in Theorem \ref{T4.1}), any further increase of the magnitude of the stress will cause no changes in the strain. These models are called \textit{strain-limiting (strain-locking) models} and such  behaviour has been observed in numerous experiments (see \cite{Sengul-strain-lim-review} and references therein). For a further discussion of such models in the purely elastic setting or in the setting of the generalized Kelvin--Voigt model  we refer to \cite{BuMaRa12}, and in the viscoelastic setting to \cite{Erbay-Sengul, Sengul-strain-lim-review}.

Now we introduce some basic kinematics in order to discuss these limiting strain models from a mathematical perspective.
We denote by $\bu(\mathbf{X}, t):=\bx(\mathbf{X},t)-\mathbf{X}$ the displacement of a given  body at a space-time point $(\mathbf{X},t)$, where $\mathbf{X}$ is the position vector in the reference configuration and
$\bx(\mathbf{X},t)$ is the position vector in the current configuration. We denote the deformation of the body, which is assumed to be stress-free initially, by $\boldsymbol{\chi}(\mathbf{X}, t)$. The deformation gradient
is defined as $\mathbf{F} = \partial \boldsymbol{\chi}/ \partial \mathbf{X} $. By the polar decomposition theorem, we can ensure the existence of positive definite, symmetric tensors $\bU$, $\bV$, and a rotation $\bR$ such that
\[\bF= \bR \bU = \bV \bR,\]
where $\bU$ and $\bV$ are the \textit{right} and \textit{left Cauchy--Green stretch tensor}, respectively.
Moreover, we know that each of these decompositions is unique and
\[\bC = {\bU}^{2} = {\bF}^{\rm T} \bF, \quad \bB  = {\bV}^{2} = \bF {\bF}^{\rm T},\]
where $\bB$, $\bC$ are called the \textit{right} and \textit{left Cauchy--Green deformation tensor}, respectively. We define the velocity as $\bv = \partial \boldsymbol{\chi}/ \partial t$ and denote by $\bD$ the symmetric part of the gradient of the velocity field $\bL = \partial \bv / \partial \bx $.
Under the assumption that
\begin{equation}\label{smallness}
\| \nabla \bu \|_{L^\infty(Q;\mathbb{R}^{d\times d})} = O(\delta), \qquad 0<\delta \ll 1,
\end{equation}
one can obtain the linearized strain, mentioned previously, as
\begin{equation}\label{lin-strain}
\beps (\bu)= \frac{1}{2} \left[\nabla \bu + (\nabla \bu)^{\rm T}\right].
\end{equation}

As is explained in \cite{Sengul-strain-lim-review}, in the purely elastic setting, starting from the following constitutive relation between the stress and the strain
\begin{equation}\label{implicit}
\bG(\bT, \bB) = \mathbf{0},
\end{equation}
for frame-indifferent and isotropic bodies, one can obtain the representation
\begin{equation}\label{representation}
\begin{split}
\bG(\bT, \bB) & = \chi_{0} \bI + \chi_{1} \bT + \chi_{2} \bT + \chi_{3} {\bT}^{2} + \chi_{4} \bB^{2} + \chi_{5} (\bT \bB + \bB \bT) \\
& \quad + \chi_{6} (\bT^{2} \bB + \bB \bT^{2}) + \chi_{7} (\bB^{2} \bT + \bT \bB^{2}) + \chi_{8}  (\bT^{2} \bB^{2} + \bB^{2} \bT^{2}),
\end{split}
\end{equation}
where the functions $\chi_i$, $i=0,\dots,8$,  depend only on the scalar invariants of $\bT$ and $\bB$, which can be expressed in terms of
\[\mathrm{tr}\,{\bT}, \mathrm{tr}\,{\bB}, \mathrm{tr}\,{\bT^{2}}, \mathrm{tr}\,{\bB^{2}}, \mathrm{tr}\,{\bT^{3}}, \mathrm{tr}\,{\bB^{3}}, \mathrm{tr}\,{\bT \bB}, \mathrm{tr}\,{\bT^{2}} \bB, \mathrm{tr}\,{\bT \bB^{2}}, \mathrm{tr}\,{\bT^{2} \bB^{2}}.\]
Under the smallness assumption (\ref{smallness}), we have that \( |\bB - (\bI  + \beps)|= O(\delta^2) \), with $\beps=\beps(\bu)$. Thus, at the end of the linearization process, \eqref{representation} gives a nonlinear relationship between \( \bT \) and \( \beps\).
In many studies a simpler subclass of constitutive relations than \eqref{representation} is considered, namely
\begin{equation}\label{B-T}
\bB =  \tilde{\chi}_{0} \bI + \tilde{\chi}_{1} \bT + \tilde{\chi}_{2} \bT^{2}.
\end{equation}
Under the assumption \eqref{smallness}, the equality \eqref{B-T} becomes
\begin{equation}\label{quad-T}
\beps  = \bar{\chi}_{0} \bI+ \bar{\chi}_{1} \bT + \bar{\chi}_{2} \bT^{2},
\end{equation}
with some invariant-dependent coefficients $\bar{\chi}_i$, $i=0,1,2$. The analysis of a limiting strain problem with a constitutive relation of the form \( \beps = \bG(\bT) \), which is a more general version of \eqref{quad-T}, with a bounded mapping \( \bG \), as those considered here, was also studied in \citep{BuMaRaSu}, \cite{BeBuMaSu17}, where the authors highlight the analytical difficulties associated with such models, most notably the lack of weak compactness of approximations to the stress tensor in \( L^1(\Omega;\mathbb{R}^{d \times d}_{sym})\). We rely on methods developed in \cite{BeBuMaSu17} in order to show that (\ref{WF2}) holds for our proposed solution of the problem. The additional time-dependence here presents further difficulties in the analysis. In particular, we must develop suitable space-time estimates.

Next we focus on the derivation of the constitutive relation of interest in this paper.
In the viscoelastic setting, as is explained in \cite{Erbay-Sengul}, instead of \eqref{implicit} one would start with a general implicit constitutive relation of the form
\begin{equation}\label{implicit-visco}
\bG(\bT, \bB, \bD) = \mathbf{0}.
\end{equation}
For simplicity and in view of (\ref{B-T}), we study the following subclass of such models:
\begin{equation}\label{visco-cons}
\alpha \bB + \beta \bD = \gamma_{0} \bI + \gamma_{1} \bT + \gamma_{2} \bT^{2},
\end{equation}
where $\gamma_{i} = \gamma_{i}(I_{1}, I_{2}, I_{3})$, $i = 0, 1, 2,$ $I_{1} = \text{tr} \bT, I_{2} = \frac{1}{2} \text{tr} \bT^{2}, I_{3} = \frac{1}{3} \text{tr} \bT^{3},$ and  $\alpha$, $\beta$ are nonnegative constants. We note that under assumption (\ref{smallness}) we may interchange derivatives with respect to \( \bx \) and \( \mathbf{X}\). In particular, the linearized counterpart of \( \mathbf{D}\) can be identified with \( \beps_t = \beps(\bu_t) \). Therefore, assuming \eqref{smallness} and writing the right-hand side of \eqref{visco-cons} more generally as a nonlinear function of $\bT$, one obtains \eqref{cons-law} as required.

Models of the type (\ref{visco-cons}) were considered in \cite{RaSa14} in order to describe viscoelastic solid bodies. The model is a generalization of the classical (linear) Kelvin--Voigt model, which in one space dimension involves the constitutive relation
\begin{equation}\label{K-V-linear}
\sigma = E\epsilon + \eta\epsilon_t,
\end{equation}
where \(\sigma\) denotes the scalar stress, \( \epsilon\) the scalar strain, and \( E\), \( \eta\) are constants signifying the modulus of elasticity and the viscosity, respectively.  As mentioned before, it is worth noting that similar models have been considered in \cite{BuMaRa12,BuKaSt13}, where the authors assumed that the stress \( \bT \) was a sum of the elastic \( \bT_{el} \) and viscous \( \bT_{vis}\) parts. Considering implicit relations for each component separately, they obtained \( \bT_{el} = \tens{H}(\beps)\), \( \bT_{vis} = \tens{G}(\beps_t)\) for nonlinear mappings \( \bH\), \( \bG\). However,  the assumptions that were made there on \( \tens{H}\) and \( \tens{G}\)  result in a problem that is no longer of strain-limiting type. This, together with the additive decomposition of the stress considered there, led to an analysis that is very different from the one performed here.

Some analysis (albeit limited) of the problem (\ref{model}) is available in the literature, which we now discuss. In one space dimension the authors of \cite{Erbay-Sengul} derived the equation
\begin{equation}\label{strain-rate}
\sigma_{xx} + \beta \sigma_{xxt} = g(\sigma)_{tt},
\end{equation}
using the equation of motion \eqref{linear-moment} together with the constitutive relation \eqref{cons-law} and setting $\alpha = 1$, where, as in (\ref{K-V-linear}), \( \sigma\) refers to the scalar stress.  In (\ref{strain-rate}), the nonlinearity $g$ corresponds to $\bG$ in the current case.
The authors investigated conditions on the function $g$ under which travelling wave solutions exist.  Furthermore, in \cite{ErErSe} the authors proved the local-in-time existence of solutions for equation \eqref{strain-rate}.
In this work, we use the same set of equations without deriving a single equation on account of  the fact that we are working in a higher-dimensional setting. In particular, the symmetric gradient does not reduce to a classical gradient operator as in the one-dimensional case, a property that is exploited in \cite{Erbay-Sengul} and \cite{ErErSe}.

 A related problem is studied in \cite{ErSe20} where the authors looked at the stress-rate case instead of the strain-rate case. In the one-dimensional setting, this resulted in the equation
\begin{equation}\label{stress-rate}
\sigma_{xx} + \gamma \sigma_{ttt} = h(\sigma)_{tt}.
\end{equation}
The constitutive law for that study was $\epsilon + \gamma \sigma_{t}  = h(\sigma)$ instead of \eqref{cons-law}. The authors pointed out that travelling wave solutions of equations \eqref{strain-rate} and \eqref{stress-rate} will coincide. However, we do not attempt to explore the stress-rate problem here.

We close this section with a thermodynamical justification of the model (\ref{model}).
We will show in particular that the total energy of the system is constant and the sum of the kinetic energy and the elastic energy is a decreasing function of time.
We suppose that  the constitutive relation can be written as
\begin{align*}
\beps + \beta\partial_t\beps = \frac{\partial \varphi}{\partial\bT}(\bT) =: \bG(\bT)
\end{align*}
where \( \varphi\) is a function from \( \mathbb{R}^{d\times d}\) to \( \mathbb{R}_+\) defined by \( \varphi(\bT) = \phi(|\bT|)\) and $\beps=\beps(\bu)$.
We shall suppose that \( \phi({0}) = \phi'(0) = 0\) and assume that \( \phi \in \mathcal{C}^2(\mathbb{R}_+;\mathbb{R}_+) \) is strictly convex.
Clearly this is the case if (\ref{A4}) holds.
Under these assumptions, \( \varphi\) is also strictly convex, noting that \( \phi \) is strictly increasing on \( [0, \infty ) \). Furthermore \( \bG\) is monotone. Next, we  define the convex conjugate \( \varphi^*\) by
\[
\varphi^*(\beps) = \sup_{\bT \in \mathbb{R}^{d\times d}_{sym}} \big( \, \beps\cdot  \bT - \varphi(\bT)\big).
\]
We note that \( \varphi^*\) is also convex and, for any \( \bT \in \mathbb{R}^{d\times d}_{sym}\), the following identity holds:
\[
\varphi^*(\bG(\bT)) + \varphi(\bT) = \bG(\bT) \cdot  \bT.
\]
Thus, the function \( \bG^{-1} = \frac{\partial\varphi^*}{\partial\bT}\) is also monotone. With these facts in mind, formally testing  (\ref{linear-moment}) against \( \partial_t \bu \) and assuming the absence of all body forces, we obtain
\begin{equation}\label{energy-1}
\frac{1}{2}\frac{\mathrm{d}}{\mathrm{d}t} \int_{\Omega} |\partial_t \bu|^2 \,\mathrm{d}x + \int_\Omega \bT \cdot  \partial_t \beps(\bu) \,\mathrm{d}x = 0.
\end{equation}
However, the integrand in the second term on the right-hand side can be rewritten as
\begin{align*}
\bT \cdot  \partial_t \beps &= \frac{\partial\varphi^*}{\partial\bT}\cdot \partial_t \beps + \left( \bT - \frac{\partial\varphi^*}{\partial\bT}(\beps) \right)\cdot  \partial_t \beps
\\
&= \partial_t (\varphi^*(\beps)) + \frac{1}{\beta} \left( \bT - \frac{\partial\varphi^*}{\partial\bT}(\beps) \right)\cdot ( \bG(\bT) - \beps)
\\
&= \partial_t (\varphi^*(\beps)) + \frac{1}{\beta} \left( \bT - \bG^{-1}(\beps) \right)\cdot ( \bG(\bT) - \beps).
\end{align*}
Substituting this back into (\ref{energy-1}) and defining \( \bT_0 := \bG^{-1}(\beps)\), we see that
\begin{equation}\label{energy-2}
\frac{\mathrm{d}}{\mathrm{d}t }\left( \int_\Omega \frac{1}{2}|\partial_t \bu|^2 + \varphi^*(\beps) \,
\mathrm{d}x\right) + \frac{1}{\beta}\int_\Omega\left( \bT - \bT_0\right)\cdot ( \bG(\bT) - \bG(\bT_0))\,\mathrm{d}x =0.
\end{equation}
Recalling that \( \bG \) is monotone, we deduce that
\begin{align*}
\sup_{t\in (0,T)} \left(\int_\Omega \frac{1}{2}|\partial_t \bu|^2 + \varphi^*(\beps) \,
\mathrm{d}x\right) \leq \int_\Omega \frac{1}{2}|\bv_0|^2 + \varphi^*(\beps(\bu_I))\,\mathrm{d}x.
\end{align*}
In particular the sum of the kinetic energy and elastic energy is decreasing. The extra term that appears in (\ref{energy-2}) corresponds to the dissipation; thus energy is conserved in accordance with the laws of thermodynamics.

The structure of the remainder of the paper is as follows. In Section \ref{Thm1} we prove Theorem \ref{T1}. We structure the proof in the following way.  First, in Section \ref{galerkin} we use a Galerkin method and find a weak solution to an approximate problem. In Section \ref{s:uni-est}, we obtain uniform bounds on the sequence of Galerkin solutions, and use these in Section \ref{s:limit} in order to take the limit as $n \to \infty$. Finally, we show that the limit is the correct one in Section \ref{s:id-non}. Uniqueness is then proved in Section \ref{s:un}. In Section \ref{regularity} we obtain further temporal and spatial regularity estimates for these solutions. Finally, in Section \ref{limiting} we look at the case when $p=1$ and give the proof of Theorem~\ref{T4.1}.

\section{Proof of Theorem~\ref{T1}}\label{Thm1}
To prove the existence of a weak solution, we use a compactness argument based on a sequence of Galerkin approximations. However, since $\bG$ is not invertible in general, we introduce the following regularization:
$$
\bG_n(\bT):= \bG(\bT) + n^{-1} |\bT|^{p-2}\,\bT.
$$
Note that for all $n\in \mathbb{N}$, the regularized mapping still satisfies \eqref{A1}--\eqref{A3} (with $C_2$ replaced by $(C_2+1)$) and in addition the inequality \eqref{A1} is strict whenever $\bT\neq \bW$. Therefore, it directly follows from the theory of monotone operators that there exists a continuous inverse $\bG_n^{-1}:\Rsym \to \Rsym$.


\subsection{Galerkin approximation}\label{galerkin}

Let $\{\bom_{j}\}_{j=1}^{\infty}$ be a basis of $W_{0}^{2d, 2}(\Omega;\R^d)$, which is orthonormal in $L^{2}(\Omega; \R^d)$.\footnote{Such a basis can be found by looking for eigenfunctions $\bom_j \in W_{0}^{2d, 2}(\Omega;\R^d)$ of the problem
$$
-\Delta^{2d} \bom_j = \lambda_j \bom_j\qquad \mbox{on $\Omega$.}$$
}  We denote by $P^n$ the projection of $W^{2d,2}_0(\Omega;\R^d)$ onto the linear hull of $\{\bom_{j}\}_{j=1}^{n}$, which is continuous.
We look for $\bu^{n}$ of the form
\[\bu^{n}(t, x) = \bu_0(t,x) + \sum_{i=1}^{n} C_{i}^{n}(t) \bom_{i}(x),\]
by solving, for all $j = 1, 2, \ldots, n$ and almost all $t\in (0,T)$, the following problem:
\begin{subequations}\label{Gn}
\begin{align}
\int_{\Omega} \partial^2_{tt} \bu^{n} \cdot \omega_{j} +\bG_n^{-1}\left(\alpha \beps(\bu^{n}) + \beta \partial_{t} \beps(\bu^{n})\right) \cdot \nabla \bom_{j} \dx &= \langle \bef, \bom_j\rangle, \label{G1} \\
 \bu^{n}(0) &= \bu_{0}(0),\label{G2}\\
 \partial_{t} \bu^{n}(0) &= \partial_t \bu_0(0). \label{G3}
\end{align}
\end{subequations}
We note that \eqref{G2} and \eqref{G3} are equivalent to $\vec{C}^n(0)=\mathbf{0}$ and $\partial_t \vec{C}^n(0)=\mathbf{0}$, respectively.
Since $\bG_n^{-1}$ is continuous and the basis functions $\{\bom_{j}\}_{j=1}^\infty $ are orthonormal in $L^2(\Omega;\mathbb{R}^d)$, the equation \eqref{G1} reduces to
\[\partial_{tt} C^{n}_{i}(t) = F_i(t,\vec{C}^{n}(t), \partial_{t} \vec{C}^{n}(t) ),\]
where $F_i$ are Carath\'{e}odory mappings. Hence, using  standard Carath\'{e}odory theory for a system of ordinary differential equations, we deduce that there exists a solution on some maximal  time interval $(0, T^{*})$. Furthermore, either we must have  $|\vec{C}^{n}(t)| + |\partial_{t}\vec{C}^{n}(t)| \to \infty$ as $t \to T^{*}_{-}$ or we can extend the solution to the whole interval $(0, T)$. We shall next show that the latter is true by establishing uniform bounds on the sequence of Galerkin approximations.

\subsection{Uniform bounds}\label{s:uni-est}

First, let us define
\[
\bT^n:=\bG_n^{-1}\left(\alpha \beps(\bu^{n}) + \beta \partial_{t} \beps(\bu^{n})\right),
\]
which is clearly equivalent to
\begin{equation}\label{Tn}
\alpha \beps(\bu^{n}) + \beta \partial_{t} \beps(\bu^{n})=\bG(\bT^n) + n^{-1} |\bT^n|^{p-2}\bT^n.
\end{equation}
Then, we multiply \eqref{G1} by $C^n_j$ and also by $\partial_t C^n_j$ and sum the resulting identities with respect to $j=1,\ldots, n$ to obtain
\begin{equation}\label{test1}
\begin{split}
 \int_{\Omega} \partial_{tt} \bu^n \cdot \partial_t (\bu^n - \bu_0)+ \bT^n \cdot \partial_t \beps(\bu^n-\bu_0)\dx  &= \langle \bef, \partial_t (\bu^n-\bu_0)\rangle, \\
 \int_{\Omega} \partial_{tt}\bu^n \cdot (\bu^n-\bu_0)+ \bT^n \cdot \beps(\bu^n-\bu_0)\dx &= \langle \bef, (\bu^n - \bu_0)\rangle.
\end{split}
\end{equation}
Next, it follows from \eqref{Tn} that
\[\bT^n \cdot \partial_t \beps(\bu^n) = \frac{1}{\beta} \left(\bG(\bT^n) \cdot \bT^n + n^{-1}|\bT^n|^p - \alpha \, \bT^n \cdot \beps(\bu^n)\right).\]
Also, we can write
\[ \int_{\Omega} \partial_{tt}(\bu^n-\bu_0) \cdot (\bu^n-\bu_0) \dx  =\ddt \int_{\Omega} \partial_{t}(\bu^n-\bu_0) \cdot (\bu^n-\bu_0)\dx - \int_{\Omega} | \partial_{t} (\bu^n-\bu_0)|^{2}\dx.\]
Using these two identities in \eqref{test1}, we obtain
\begin{equation}\label{test2}
\begin{split}
 &\frac{1}{2} \ddt\int_{\Omega}|\partial_{t} (\bu^n-\bu_0)|^2 \dx + \frac{1}{\beta} \int_{\Omega} \bG(\bT^n)\cdot \bT^n + n^{-1}|\bT^n|^p \dx \\
 &= \frac{\alpha}{\beta} \int_{\Omega}  \bT^n \cdot \beps(\bu^n)\dx + \langle \bef, \partial_t (\bu^n-\bu_0)\rangle +\int_{\Omega} \bT^n\cdot \partial_t \beps(\bu_0) - \partial_{tt} \bu_0 \cdot \partial_t(\bu^n-\bu_0)\dx,
 \end{split}
\end{equation}
and
\begin{equation}\label{test2b}
\begin{split}
&\ddt\frac{\alpha}{\beta} \int_{\Omega} \partial_{t}(\bu^n-\bu_0)\cdot (\bu^n-\bu_0)\dx + \frac{\alpha}{\beta} \int_{\Omega} \bT^n \cdot \beps(\bu^n)\dx \\
&= \frac{\alpha}{\beta} \int_{\Omega} |\partial_{t} (\bu^n-\bu_0)|^{2}-\partial_{tt}\bu_0 \cdot (\bu^n-\bu_0) +\bT^n \cdot \beps(\bu_0)\dx +\frac{\alpha}{\beta}\langle \bef, (\bu^n - \bu_0)\rangle.
\end{split}
\end{equation}
By summing these equalities we find that one term cancels and we deduce that
\begin{equation}\label{test3}
\begin{split}
 &\frac{1}{2} \ddt\int_{\Omega}|\partial_{t} (\bu^n-\bu_0)|^2 +\frac{2\alpha}{\beta}  \partial_{t}(\bu^n-\bu_0)\cdot (\bu^n-\bu_0)\dx+ \frac{1}{\beta} \int_{\Omega} \bG(\bT^n)\cdot \bT^n + n^{-1}|\bT^n|^p \dx \\
 &=\langle\bef, \partial_t (\bu^n-\bu_0)\rangle +\int_{\Omega} \bT^n\cdot \partial_t \beps(\bu_0) - \partial_{tt} \bu_0 \cdot \partial_t(\bu^n-\bu_0)\dx\\
 &\quad +\frac{\alpha}{\beta} \int_{\Omega} |\partial_{t} (\bu^n-\bu_0)|^{2}-\partial_{tt}\bu_0 \cdot (\bu^n-\bu_0) +\bT^n \cdot \beps(\bu_0)\dx +\langle \bef, (\bu^n - \bu_0)\rangle.
 \end{split}
\end{equation}
Next, we define on \( [0, T]\) the function
$$
Y^n:=\frac14\int_{\Omega}|\partial_{t}(\bu^n-\bu_0)|^{2} + |\bu^n-\bu_0|^{2}  + \left|\partial_{t} (\bu^n-\bu_0) + \frac{2 \alpha}{\beta} (\bu^n-\bu_0)\right|^2\dx.
$$
Using this, we can rewrite the first term on the left-hand side of \eqref{test3} as
$$
\frac{1}{2} \ddt\int_{\Omega}|\partial_{t} (\bu^n-\bu_0)|^2 +\frac{2\alpha}{\beta}  \partial_{t}(\bu^n-\bu_0)\cdot (\bu^n-\bu_0)\dx=Y^n - \left(\frac{\alpha^{2}}{\beta^{2}} + \frac{1}{4} \right)\ddt \int_{\Omega}  |\bu^n-\bu_0|^2 \dx.
$$
Consequently, using this identity in \eqref{test3}, applying \eqref{A2} to the second term on the left-hand side, and the H\"{o}lder inequality to the terms on the right-hand side together with the Poincar\'e and Korn inequalities, it follows that
\begin{equation}
\begin{split}
 &\ddt Y^n + \frac{C_1}{\beta} \int_{\Omega} |\bT^n|^p \dx \le C\left(1+ Y^n\right)+C(\|\beps(\bu_0)\|_{p'}+\|\partial_t \beps(\bu_0)\|_{p'})\|\bT^n\|_p\\
 &+ C(\|\beps(\bu^n)\|_{p'}+\|\partial_t \beps(\bu^n)\|_{p'}+\|\beps(\bu_0)\|_{p'}+\|\partial_t \beps(\bu_0)\|_{p'})(\|\bef\|_{(W^{1,p'}_0)^*} +\|\partial_{tt} \bu_0\|_{(W^{1,p'}_0)^*}),
\end{split}\label{tt1}
\end{equation}
where $C$ is a generic constant that is independent of \( n \).
To bound  the right-hand side, we use \eqref{Tn} to observe that
\[\partial_{t} \left(\mathrm{e}^{\frac{\alpha}{\beta} t} \beps(\bu^n)\right) = \frac{\mathrm{e}^{\frac{\alpha}{\beta} t}}{\beta} (\bG(\bT^n)+n^{-1}|\bT^n|^{p-2}\bT^n),\]
which, after integration with respect to time, gives
\[\beps(\bu^n(t))= \mathrm{e}^{-\frac{\alpha}{\beta} t} \beps(\bu_{0}(0)) + \mathrm{e}^{-\frac{\alpha}{\beta} t} \int_{0}^{t} \frac{\mathrm{e}^{\frac{\alpha}{\beta}\tau}}{\beta} (\bG(\bT^n(\tau) + n^{-1}|\bT^n(\tau)|^{p-2}\bT^n(\tau))\dtau.\]
Using  properties of the Bochner integral, it follows that
\begin{equation}
\begin{split}
\|\beps(\bu^n(t))\|_{p'}^{p'} &\leq C  \left(\int_{0}^{t} \| \bG(\bT^n) + n^{-1}|\bT^n|^{p-2}\bT^n\|_{p'}^{p'}\dtau  + \| \bu_{0}(0) \|_{1,p'}^{p'} \right)\\
&\leq C  \left(\int_{0}^{t} \|\bT^n\|_p^p\dtau  + \| \bu_{0}(0) \|_{1,p'}^{p'}+1 \right), \label{tt3}
\end{split}
\end{equation}
where for the second inequality we have used \eqref{A3}. Consequently, using  \eqref{tt3} and \eqref{Tn}, we have also the following bound on the time derivative:
\begin{equation}
\begin{split}
\|\partial_t \beps(\bu^n(t))\|_{p'}^{p'} &\leq C  \left(1+  \| \bu_{0}(0) \|_{1,p'}^{p'}+ \|\bT^n(t)\|_p^p +\int_{0}^{t} \|\bT^n\|_p^p\dtau \right). \label{tt4}
\end{split}
\end{equation}
Hence, using \eqref{tt3} and \eqref{tt4} for the terms appearing on the right-hand side of \eqref{tt1}, and applying Young's inequality to the resulting right-hand side, we see that
\begin{equation}
\begin{split}
 &\ddt \left(Y^n + \frac{C_1}{4\beta} \int_0^t \|\bT^n\|_p^p \dtau\right) + \frac{C_1}{4\beta}  \|\bT^n\|_p^p \le C\left(Y^n + \frac{C_1}{4\beta} \int_0^t \|\bT^n\|_p^p \dtau\right)\\
 &\qquad +C\sup_{t\in [0,T]}\|\bu_0(t)\|_{1,p'}^{p'} + C\left(\|\partial_t \beps(\bu_0)\|^{p'}_{p'}+\|\bef\|^p_{(W^{1,p'}_0)^*} +\|\partial_{tt} \bu_0\|^p_{(W^{1,p'}_0)^*}\right).
\end{split}\label{tfinal}
\end{equation}
Thus, using Gr\"{o}nwall's lemma and the assumptions on the data, we have that
\begin{equation}
\begin{split}
\sup_{t\in (0,T)} Y^n(t)  + \int_0^T\|\bT^n\|_p^p \dtau\le C(\bu_0,\bef)+ Y^n(0)=C(\bu_0,\bef).
\end{split}\label{tfinali}
\end{equation}
Finally, from the definition of $Y^n$,  the bounds \eqref{tt3}, \eqref{tt4}, and Korn's inequality, we deduce that
\begin{equation}
\sup_{t\in (0,T)} \left(\|\partial_t \bu^n\|_2^2+\|\bu^n\|_2^2 + \|\bu^n\|_{1,p'}^{p'}\right)+\int_0^T \|\bT^n\|_p^{p} + \|\partial_t \bu^n\|_{1,p'}^{p'} \dt \le C(\bu_0,\bef).\label{AE-n}
\end{equation}
It remains to provide a bound on $\partial_{tt}\bu^n$. We define the set $\mathcal{V}:=\{\bw \in W^{2d, 2}_{0}(\Omega;\R^d), \| \bw\|=1\}$.  Using the orthonormality of the basis and  the continuity of $P^n$,  we deduce from \eqref{G1} that
\begin{equation*}
\begin{split}
 \| \partial_{tt} \bu^n(t) \|_{(W^{2d, 2}_{0}(\Omega;\R^d))^{*}} &= \sup_{\bw \in \mathcal{V}} \int_{\Omega} \partial_{tt} \bu^{n}(t)\cdot  \bw \dx \\
& = \sup_{\bw \in \mathcal{V}} \int_{\Omega} \partial_{tt} \bu^{n}(t) \cdot  P^{n} \bw \dx \\
& = \sup_{\bw \in \mathcal{V}}\left(\langle \bef, \bw\rangle - \int_{\Omega} \bT^{n}(t) \cdot  \nabla (P^{n} \bw) \dx\right)\\
& \leq \sup_{\bw \in \mathcal{V}} (\|\bef(t)\|_{(W^{1,p'}_0(\Omega;\R^d))^*}+\|\bT^{n} (t)\|_{p}) \| P^{n} \bw \|_{1,p'} \\
& \leq C\sup_{\bw \in \mathcal{V}}(\|\bef(t)\|_{(W^{1,p'}_0(\Omega;\R^d))^*}+\|\bT^{n} (t)\|_{p})  \|P^n\bw\|_{2d,2} \\
&\leq C (\|\bef(t)\|_{(W^{1,p'}_0(\Omega;\R^d))^*}+\|\bT^{n} (t)\|_{p}),
\end{split}
\end{equation*}
where we have used the fact that $W^{2d,2}(\Omega;\R^d)$ is continuously embedded into $W^{1,p'}(\Omega;\R^d)$.
Therefore, it follows from \eqref{AE-n} that
\begin{equation}\label{AE-2-n}
\int_{0}^{T} \| \partial_{tt} \bu^{n} \|^{p}_{(W^{2d, 2}_{0}(\Omega;\R^d))^{*}} \dt\le C\int_0^T \|\bef\|^p_{(W^{1,p'}_0(\Omega;\R^d))^*}+\|\bT^{n} \|_{p}^p \dt \le C(\bu_0,\bef).
\end{equation}

\subsection{Limit $n\to \infty$}\label{s:limit}

 Using the bounds from Section \ref{s:uni-est} in conjunction with the reflexivity and separability of the underlying spaces,  we can find a subsequence, that we do not relabel, such that
\begin{equation}\label{C1}
\begin{aligned}
\bG(\bT^{n}) & \rightharpoonup \bar{\bG} &&\text{weakly in }L^{p'}(0, T; L^{p'}(\Omega;\Rsym)), \\
\bu^{n} & \overset{*}{\rightharpoonup} \bu &&\text{weakly$^*$ in } W^{1, \infty}(0, T; L^{2}(\Omega;\R^d)), \\
\bu^{n} & \rightharpoonup \bu &&\text{weakly in } W^{1, p'}(0, T; W^{1, p'}(\Omega;\R^d)), \\
\bT^{n} & \rightharpoonup \bT &&\text{weakly in } L^{p}(0, T; L^{p}(\Omega;\Rsym)), \\
\partial_{tt} \bu^{n} & \rightharpoonup \partial_{tt} \bu &&\text{weakly in } L^{p}(0, T; (W_{0}^{2d, 2}(\Omega;\R^d))^{*}).
\end{aligned}
\end{equation}
Hence, we see that $\bT$ fulfills \eqref{FST} and $\bu$ belongs to the first two spaces indicated in \eqref{FSu}. In addition, thanks to the fact that $W^{1,p'}(\Omega;\R^d)$ is compactly embedded into $L^2(\Omega;\R^d)$, using the Aubin--Lions lemma (for a further subsequence, not indicated,) we even have that
\begin{equation}\label{C2}
\begin{aligned}
\bu^{n} & \to  \bu &&\text{strongly in }\mathcal{C}([0,T];L^2(\Omega;\R^d), \\
\partial_{t} \bu^{n} & \to \partial_{t} \bu &&\text{strongly in }L^{2}(0, T; L^{2}(\Omega;\R^d))\cap \mathcal{C}([0,T]; (W^{2d,2}_0(\Omega; \R^d))^* ).
\end{aligned}
\end{equation}
Thus, it follows directly from the fact that $\bu^n(0)=\bu_0(0)$ and $\partial_t \bu^n(0)=\partial_t \bu_0(0)$ and the above convergence result (\ref{C2}) that
$$
\bu(0)=\bu_0 \quad \textrm{and} \quad \partial_t \bu(0)=\partial_t \bu_0(0).
$$

Next, we let $n\to \infty$ in \eqref{G1}. Let $\phi \in \mathcal{C}^{\infty}([0, T])$ be arbitrary. We multiply \eqref{G1} by $\phi$ and integrate the result  over $(0, T)$ to get
\[\int_{0}^{T} \langle \partial_{tt} \bu^{n}, \bom_{j} \phi \rangle \dt + \int_{0}^{T} \int_{\Omega} \bT^{n} \cdot \nabla(\phi \bom_{j})\dx \dt  = \int_0^T \langle \bef, \bom_j \rangle \dt ,\]
for every \( j\in \{ 1,\ldots, n \}\).
Thus, for a fixed $j$, we can let $n \to \infty$ and using the weak convergence result \eqref{C1} we deduce that
\[\int_{0}^{T} \langle \partial_{tt} \bu, \bom_{j} \phi \rangle \dt + \int_{0}^{T} \int_{\Omega} \bT \cdot \nabla(\phi \bom_{j})\dx \dt  = \int_0^T \langle \bef, \bom_j \rangle \dt.\]
Since $j$ and $\phi$ were arbitrary and recalling that $\{\bom_{j}\}_{j=1}^{\infty}$ forms a basis of $W^{2d,2}_0(\Omega; \R^d)$, it follows that
\begin{equation}\label{WFe}
\langle \partial_{tt}\bu, \bw \rangle + \int_{\Omega} \bT \cdot \nabla \bw \dx = \langle \bef, \bw \rangle \qquad \forall\, \bw \in W_{0}^{2d,2}(\Omega;\R^d),\quad \text{for a.e. }\,t \in (0, T).
\end{equation}
Consequently, thanks to the density of $W^{2d,2}_0(\Omega; \R^d)$ in $W^{1,p'}_0(\Omega;\R^d)$, we see that for almost all $t\in (0,T)$ we have $\partial_{tt}\bu \in (W^{1,p'}_0(\Omega;\R^d))^*$. Furthermore, we have
$$
\|\partial_{tt}\bu^n(t)\|_{(W^{1,p'}_0(\Omega;\R^d))^*}=\sup_{\bw\in W^{1,p'}_0(\Omega;\R^d); \, \|\bw\|=1}\left[ -\int_{\Omega}\bT^n(t)\cdot \nabla \bw\dx +\langle \bef (t), \bw \rangle\right].
$$
Thus, using \eqref{AE-n} and \eqref{C1}, it follows that
\begin{equation}
\int_0^T \|\partial_{tt}\bu^n\|_{(W^{1,p'}_0(\Omega;\R^d))^*}^p \dt \le C\int_0^T \|\bT^n\|_p^p + \|\bef\|_{(W^{1,p'}_0(\Omega;\R^d))^*}^p \dt\le C(\bu_0,\bef).\label{time-u}
\end{equation}
Hence, \eqref{WFe} can be strengthened so that \eqref{WF} holds. In addition, by standard parabolic interpolation and the fact that $\partial_t \bu_0 \in \mathcal{C}([0,T]; L^2(\Omega; \R^d))$, we see that  $\bu$ satisfies \eqref{FSu}.

Finally, letting $n\to \infty$ in \eqref{Tn} and using \eqref{C1}, we see that
\begin{equation}\label{T-constw}
\alpha \beps(\bu) + \beta \partial_{t} \beps(\bu)=\overline{\bG} \quad \textrm{a.e. in }Q.
\end{equation}
Hence, in order to show \eqref{T-const} and deduce the existence of a weak solution, it remains to show that $\overline{\bG}=\bG(\bT)$ a.e. in $Q$.

\subsection{Identification of the nonlinearity}\label{s:id-non}

In order to identify the nonlinearity, we will use monotone operator theory.  Let $\phi\in \mathcal{C}^1_0([0,T])$ be an arbitrary nonnegative function. We multiply both identities in \eqref{test1} by $\phi$ and integrate the result over $(0,T)$. With the help of integration by parts and the fact that $\bu^n(0)=\bu_0(0)$ and $\phi(T)=0$, we observe that
\begin{equation}\label{id1}
\begin{split}
 &\int_0^T\int_{\Omega} \bT^n \cdot \partial_t \beps(\bu^n) \phi\dx \dt=\int_0^T\int_{\Omega}\frac{|\partial_{t} (\bu^n-\bu_0)|^2 \phi'}{2}+ \bT^n\cdot \partial_t \beps(\bu_0)\phi\dx\dt \\
 &\qquad+\int_0^T \langle \bef, \partial_t (\bu^n-\bu_0)\rangle\phi - \langle\partial_{tt} \bu_0,  \partial_t(\bu^n-\bu_0)\rangle\phi \dt
 \end{split}
\end{equation}
and
\begin{equation}\label{id2}
\begin{split}
\int_0^T\int_{\Omega} \bT^n \cdot \beps(\bu^n)\phi\dx \dt&=\int_0^T\int_{\Omega} \partial_{t}(\bu^n-\bu_0)\cdot (\bu^n-\bu_0)\phi'\dx\dt  \\
&\quad+\int_0^T\int_{\Omega} |\partial_{t} (\bu^n-\bu_0)|^{2}\phi+\bT^n \cdot \beps(\bu_0)\phi\dx \dt \\
&\quad+\int_0^T\langle \bef, (\bu^n - \bu_0)\rangle\phi -\langle\partial_{tt}\bu_0, (\bu^n-\bu_0)\rangle  \phi \dt.
\end{split}
\end{equation}
Next, we use the weak convergence results \eqref{C1} and the strong convergence results \eqref{C2} to identify the limits on the right-hand sides of \eqref{id1} and \eqref{id2}. In particular, we obtain
\begin{equation}\label{id1l}
\begin{split}
 \lim_{n\to \infty}&\int_0^T\int_{\Omega} \bT^n \cdot \partial_t \beps(\bu^n) \phi\dx \dt=\int_0^T\int_{\Omega}\frac{|\partial_{t} (\bu-\bu_0)|^2 \phi'}{2}+\bT\cdot \partial_t \beps(\bu_0)\phi\dx\dt \\
 & \qquad +\int_0^T \langle \bef, \partial_t (\bu-\bu_0)\rangle\phi - \langle\partial_{tt} \bu_0,  \partial_t(\bu-\bu_0)\rangle\phi \dt
 \end{split}
\end{equation}
and
\begin{equation}\label{id2l}
\begin{split}
\lim_{n\to \infty}\int_0^T\int_{\Omega} \bT^n \cdot \beps(\bu^n)\phi\dx \dt&=\int_0^T\int_{\Omega} \partial_{t}(\bu-\bu_0)\cdot (\bu-\bu_0)\phi'\dx\dt  \\
&\quad+\int_0^T\int_{\Omega} |\partial_{t} (\bu-\bu_0)|^{2}\phi+\bT \cdot \beps(\bu_0)\phi\dx \dt \\
&\quad+\int_0^T\langle \bef, (\bu - \bu_0)\rangle\phi -\langle\partial_{tt}\bu_0, (\bu-\bu_0)\rangle  \phi \dt.
\end{split}
\end{equation}
Next, we use \eqref{WF} to evaluate the terms on the right-hand sides of \eqref{id1l}, \eqref{id2l}.
We note that, thanks to the regularity of $\bu$, both
$\bu-\bu_0$ and $\partial_{t} (\bu-\bu_0)$
are admissible test functions in \eqref{WF}. Using these two choices as the test function $\bw$, multiplying each of the resulting equalities by $\phi$ and integrating over $(0, T)$, we may apply integration by parts in order to obtain the following identities:
\begin{equation}\label{id1lf}
\begin{split}
 &\int_0^T\int_{\Omega} \bT \cdot \partial_t \beps(\bu) \phi\dx \dt=\int_0^T\int_{\Omega}\frac{|\partial_{t} (\bu-\bu_0)|^2 \phi'}{2}+\bT\cdot \partial_t \beps(\bu_0)\phi\dx\dt \\
 & \qquad +\int_0^T \langle \bef, \partial_t (\bu-\bu_0)\rangle\phi - \langle\partial_{tt} \bu_0,  \partial_t(\bu-\bu_0)\rangle\phi \dt
 \end{split}
\end{equation}
and
\begin{equation}\label{id2lf}
\begin{split}
\int_0^T\int_{\Omega} \bT \cdot \beps(\bu)\phi\dx \dt&=\int_0^T\int_{\Omega} \partial_{t}(\bu-\bu_0)\cdot (\bu-\bu_0)\phi'\dx\dt  \\
&\quad+\int_0^T\int_{\Omega} |\partial_{t} (\bu-\bu_0)|^{2}\phi+\bT \cdot \beps(\bu_0)\phi\dx \dt \\
&\quad+\int_0^T\langle \bef, (\bu - \bu_0)\rangle\phi -\langle\partial_{tt}\bu_0, (\bu-\bu_0)\rangle  \phi \dt.
\end{split}
\end{equation}
Comparing \eqref{id1l} with \eqref{id1lf} and \eqref{id2l} with \eqref{id2lf}, we see that
\begin{equation}
\label{jk}
\limsup_{n\to \infty} \int_{Q} \phi\bT^{n} \cdot (\alpha \beps(\bu^{n}) + \beta \partial_t \beps(\bu^{n}))\dx \dt  \leq \int_{Q} \phi\bT \cdot (\alpha \beps(\bu)  + \beta \partial_t \beps(\bu))\dx \dt.
\end{equation}
Therefore, using the nonnegativity of $\phi$, we observe that
\begin{equation}\label{MN}
\begin{split}
\limsup_{n\to \infty}\int_{Q} \phi\bG(\bT^{n})\cdot \bT^{n}\dx \dt   &\le\limsup_{n\to \infty}\int_{Q} \phi(\bG(\bT^{n})+n^{-1}|\bT^n|^{p-2}\bT^n)\cdot \bT^{n}\dx \dt \\
&\overset{\eqref{Tn}}=\limsup_{n\to \infty}\int_{Q} \phi\bT^n \cdot (\alpha \beps(\bu^{n}) + \beta \partial_t \beps(\bu^{n})) \dx \dt \\
&\overset{\eqref{jk}}\le \int_{Q} \phi\bT \cdot (\alpha \beps(\bu) + \beta \partial_t \beps(\bu)) \dx \dt\\
&\overset{\eqref{T-constw}}=\int_{Q} \phi\bT \cdot \overline{\bG} \dx \dt.
\end{split}
\end{equation}
The inequality (\ref{MN}) is the key to identifying the nonlinearity.
Let $\bW \in L^{p}(Q,\Rsym)$ be arbitrary. Using the  monotonicity assumption \eqref{A1}, the weak convergence results  \eqref{C1}, the bound  \eqref{MN} and the nonnegativity of $\phi$, we obtain
\[0 \leq \limsup_{n\to \infty} \int_{Q} \phi \left(\bG(\bT^{n}) - \bG(\bW)\right)\cdot\left(\bT^{n} - \bW\right)\dx \dt  \leq \int_{Q} \phi \left(\overline{\bG} - \bG(\bW)\right)\cdot (\bT - \bW)\dx \dt.\]
Setting $\bW = \bT - \kappa \bB$ for an arbitrary $\bB \in L^{p'}(Q;\Rsym)$ and $\kappa>0$, we divide through by $\kappa$ to deduce that
$$
0\le  \int_{Q} \phi \left(\overline{\bG} - \bG(\bT - \kappa \bB)\right)\cdot \bB\dx \dt.
$$
Hence, since $\bG$ is continuous, we may let $\kappa \to 0_+$ to deduce that
$$
0\le  \int_{Q} \phi \left(\overline{\bG} - \bG(\bT)\right)\cdot \bB\dx \dt.
$$
As $\bB$ and $\phi$ are arbitrary, we conclude that
\[\overline{\bG} = \bG(\bT) \quad \text{a.e. in }Q.\]
Thus we have proved the existence of a weak solution.

\subsection{Uniqueness of solution}\label{s:un}
To complete the proof of Theorem \ref{T1} it remains to show uniqueness of the weak solution.
To this end, let $(\bu_{1}, \bT_{1})$ and $(\bu_{2}, \bT_{2})$ be two weak solutions of (\ref{model}) emanating from the same data. We denote $\bu:=\bu_1-\bu_2$.  Then, using \eqref{WF}, we see that
\[\langle \partial_{tt} \bu, \bw \rangle + \int_{\Omega} (\bT_{1} - \bT_{2})\cdot \beps(\bw)\dx = 0 \quad \forall\, \bw \in W_{0}^{1, p'}(\Omega;\R^d) \textrm{ and a.e. }t\in (0,T).\]
Since $\bu$ and $\partial_t \bu$ belong to $W^{1,p'}_0(\Omega;\R^d)$ for almost all $t\in (0,T)$, we can set $\bw=\bu$ and $\bw= \partial_{t} \bu$ in the above to deduce that, for almost all $t$, the following holds:
\begin{equation*}
\begin{split}
&\frac{1}{2} \ddt \| \partial_{t} \bu\|_{2}^{2} + \int_{\Omega} (\bT_{1} - \bT_{2}) \cdot \partial_t \beps(\bu) \dx = 0, \\
& \ddt \int_{\Omega} \partial_{t} \bu \cdot  \bu \dx  + \int_{\Omega} (\bT_{1} - \bT_{2}) \cdot \beps(\bu)\dx  = \int_{\Omega}|\partial_{t} \bu |^2 \dx.
\end{split}
\end{equation*}
Therefore,
\[\ddt\left(\int_{\Omega} \frac{\beta}{2} | \partial_{t} \bu |^{2} + \alpha \partial_{t} \bu \cdot \bu \dx \right) + \int_{\Omega} (\bT_{1} - \bT_{2}) \cdot \left(\beta \partial_t\beps(\bu) + \alpha \beps(\bu)\right)\dx = \int_{\Omega} \alpha|\partial_{t} \bu|^2 \dx.\]
Using the same procedure as in the a~priori estimates and also the constitutive relation \eqref{T-const}, we obtain
\begin{equation*}
\begin{split}
& \frac{1}{4} \ddt \int_{\Omega} \beta |\partial_{t} \bu|^{2} + \beta |\bu|^{2} + \beta \left|\partial_{t} \bu + \frac{2 \alpha}{\beta} \bu \right|^{2}\dx + \int_{\Omega} (\bG(\bT_1)-\bG(\bT_2))\cdot (\bT_{1} - \bT_{2}) \,\mathrm{d}x\\
& =\int_{\Omega} \alpha|\partial_{t} \bu|^2 + \left(\beta+\frac{\alpha^2}{\beta}\right)|\bu|^2\dx
\\&
\le C(\alpha,\beta) \int_{\Omega} \beta |\partial_{t} \bu|^{2} + \beta |\bu|^{2} + \beta \left|\partial_{t} \bu + \frac{2 \alpha}{\beta} \bu \right|^{2}\dx.
\end{split}
\end{equation*}
The second term on the left-hand side is nonnegative thanks to \eqref{A1} so we may apply Gr\"{o}nwall's inequality.   Since $\bu(0)=\partial_t \bu(0)=\mathbf{0}$,  we deduce that $\bu = \mathbf{0}$ a.e. in $Q$. In addition, by monotonicity, we also obtain that  $\big(\bG(\bT_{1}) - \bG(\bT_{2})\big)\cdot (\bT_{1} - \bT_{2}) = 0$ a.e. in~$Q$. This proves that $\bu_{1} = \bu_{2}$ a.e. in $Q$, and if $\bG$ is strictly monotone then also $\bT_{1} = \bT_{2}$.

\section{Regularity estimates}\label{regularity}
\def\mA{\mathcal{A}}

In this section we prove the higher regularity estimates for the solution from Theorem \ref{T1}. We note that this is an essential part in the proof of the existence of a solution for the limiting strain model, i.e., the case~$p=1$, as it
involves passing to the limit $p \rightarrow 1_+$.

As the focus turns to the limiting strain model, in this part we will now assume that there exists a strictly convex $\mathcal{C}^2$-function $F:\Rsym\to \R^d$ such that, for all $\bT\in \Rsym$,
\begin{equation}\label{basic-e}
\frac{\partial F(\bT)}{\partial \bT} = \bG(\bT).
\end{equation}
In this case, $\bG$ is strictly monotone. To simplify the subsequent notation, for an arbitrary $\bT\in \Rsym$, we denote
$$
\mathcal{A}(\bT):=\frac{\partial^2 F(\bT)}{\partial \bT \partial \bT} = \frac{\partial \bG(\bT)}{\partial \bT}, \qquad \mA^{ij}_{kl}(\bT):=\frac{\partial \bG_{ij}(\bT)}{\partial \bT_{kl}}.
$$
We also define a new scalar product on $\Rsym$ by
\begin{equation}\label{scalar}
(\bV,\bW)_{\mA}:= \mA(\bT)\bV \cdot \bW = \sum_{i,j,k,l=1}^d \frac{\partial \bG_{ij}(\bT)}{\partial \bT_{kl}} \bV_{ij} \bW_{kl}.
\end{equation}
The fact that (\ref{scalar}) does indeed define a scalar product follows from the fact that~$\bG$ has a  potential~$F$. In particular, we know that for all $\bT\in \Rsym$ there holds $\frac{\partial \bG_{ij}(\bT)}{\partial \bT_{kl}}  =\frac{\partial \bG_{kl}(\bT)}{\partial \bT_{ij}}$, i.e., symmetry, and also   $\mA$ is positive definite as a result of the convexity assumption.

In what follows, we will split the regularity estimates. First, we focus on time regularity and then we consider regularity with respect to the spatial variable. Here, we provide only a formal proof. Nevertheless, the time regularity proof is in fact fully rigorous since it can be deduced already at the level of Galerkin approximations. The spatial regularity proof is only formal, but can be justified by using a standard difference quotient technique. We emphasise  that we do not impose any coercivity and growth assumptions on $\mA$ here because, in the case $p=1$, we lose such information.

We note that when $p\in (1,\infty)$ one can usually assume that
\begin{equation}
\begin{aligned}
|(\bV,\bW)_{\mA}|\le C_3(1+|\bT|)^{p-2}\, |\bV|\, |\bW|,
\qquad (\bW,\bW)_{\mA} \ge C_4(1+|\bT|)^{p-2}\,
|\bW|^2.
\end{aligned}\label{simplas}
\end{equation}
Under  assumption \eqref{simplas}, the regularity estimates can be deduced in an easier way. However, they are not included here as the more challenging case of $p=1$ is our primary interest.
Also, it is worth  observing that our prototype models  \eqref{A4p} do not satisfy \eqref{simplas}$_2$ and in general, the assumption \eqref{simplas}$_2$ cannot be satisfied when $p=1$. 

Defining the convex conjugate \(F^*\) of \( F\) as in Section~\ref{physics}, we recall that, from the definition of~\( \bG\), we have that
\begin{equation}
F(\bT) + F^*(\bG(\bT)) = \bG(\bT) \cdot \bT.\label{basic-e2}
\end{equation}

\subsection{Time regularity}
Here, we improve the bound on the time derivative. This bound will be used for the limiting strain model in order to pass to the limit in the term $\partial_{tt}\bu$ in the weak formulation. We formulate the following lemma locally in time in order to keep the initial data as general as possible.
\begin{lemma}\label{time-r}
Let $p\in (1,\infty)$ and suppose that \eqref{basic-e} holds with $\bG$ fulfilling \eqref{A1}--\eqref{A3}. Assume that $\bef\in L^2(0,T; L^2(\Omega;\R^d))$ and $\bu_0 \in W^{2,p'}_{loc}(0,T; W^{1,p'}(\Omega;\R^d))$. Then for any weak solution to~\eqref{model} and for every~$\delta>0$, the following bound holds:
\begin{equation}\label{WFfinallocal}
\begin{split}
&\sup_{t\in (\delta,T)}\int_{\Omega} F^*(\bG(\bT))\dx+\int_{\delta}^T\|\partial_{tt}\bu\|_2^2\dt\\
& \le  C(\alpha,\beta) \left(\int_{\frac{\delta}{2}}^T\int_{\Omega} |\bef|_2^2 + |\partial_t \bu|_2^2 + |\partial_{tt} \bu_0|_2^2 + |\partial_t \bu_0|_2^2+|\bT \cdot \partial_t(\beta\partial_t \beps(\bu_0)+\alpha \beps(\bu_0))|\dx \dt\right)\\
&\quad  +\frac{C(\alpha,\beta)}{\delta}\int_0^{\delta}\int_{\Omega}F^*(\alpha \beps(\bu(\tau)) + \beta \partial_t \beps(\bu(\tau)))+|\partial_t \bu(\tau)|^2\dx \dtau.
\end{split}
\end{equation}
If additionally $\bu_0 \in W^{2,p'}(0,T; W^{1,p'}(\Omega;\R^d))$, then we have the following global-in-time bound:
\begin{equation}\label{WFfinal}
\begin{split}
&\sup_{t\in (0,T)}\int_{\Omega} F^*(\bG(\bT))\dx+\int_0^T\|\partial_{tt}\bu\|_2^2\dt\\
& \le  C(\alpha,\beta) \left(\int_Q |\bef|_2^2 + |\partial_t \bu|_2^2 + |\partial_{tt} \bu_0|_2^2 + |\partial_t \bu_0|_2^2+|\bT \cdot \partial_t(\beta\partial_t \beps(\bu_0)+\alpha \beps(\bu_0))|\dx \dt\right)\\
&\quad  +C(\alpha,\beta)\int_{\Omega}F^*(\alpha \beps(\bu_0(0)) + \beta \partial_t \beps(\bu_0(0))) +  |\partial_t \bu_0(0)|^2\dx.
\end{split}
\end{equation}
\end{lemma}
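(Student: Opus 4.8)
The backbone of the argument is the Legendre duality \eqref{basic-e2}. Since $\bG=\partial F/\partial\bT$ with $F$ strictly convex and $\mathcal C^2$, one has $\partial F^*/\partial\bA\,(\bG(\bT))=\bT$, and hence, along the solution,
\[
\ddt\int_\Omega F^*(\bG(\bT))\dx=\int_\Omega\bT\cdot\partial_t\big(\bG(\bT)\big)\dx=\int_\Omega\bT\cdot\big(\alpha\,\partial_t\beps(\bu)+\beta\,\partial_{tt}\beps(\bu)\big)\dx,
\]
the last equality being \eqref{T-const} differentiated in time; note also that $F^*\ge0$, so $\int_\Omega F^*(\bG(\bT))$ is a genuine energy. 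The plan is then to use $\bw=\partial_{tt}\bu-\partial_{tt}\bu_0$ as a test function in \eqref{WF}, which is legitimate at the level of the Galerkin system \eqref{Gn} where $\partial_{tt}\bu^n$ lies in the span of the basis. This yields, for a.e.\ $t$,
\[
\|\partial_{tt}\bu\|_2^2+\int_\Omega\bT\cdot\partial_{tt}\beps(\bu)\dx=\int_\Omega\partial_{tt}\bu\cdot\partial_{tt}\bu_0+\bT\cdot\partial_{tt}\beps(\bu_0)+\bef\cdot(\partial_{tt}\bu-\partial_{tt}\bu_0)\dx,
\]
and eliminating $\int_\Omega\bT\cdot\partial_{tt}\beps(\bu)$ by the identity above puts exactly $\|\partial_{tt}\bu\|_2^2+\tfrac1\beta\ddt\int_\Omega F^*(\bG(\bT))$ on the left, at the cost of a term $\tfrac\alpha\beta\int_\Omega\bT\cdot\partial_t\beps(\bu)$ on the right.

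The two remaining lower-order terms are then dealt with as follows. Splitting $\partial_t\beps(\bu)=\partial_t\beps(\bu-\bu_0)+\partial_t\beps(\bu_0)$, the $\bu_0$-piece combines with $\int_\Omega\bT\cdot\partial_{tt}\beps(\bu_0)$ into $\tfrac1\beta\int_\Omega\bT\cdot\partial_t\big(\beta\partial_t\beps(\bu_0)+\alpha\beps(\bu_0)\big)\dx$, i.e.\ precisely the data term in \eqref{WFfinallocal}–\eqref{WFfinal}, which is in $L^1(Q)$ by Hölder's inequality since $\bT\in L^p(Q)$ and $\bu_0\in W^{2,p'}_{loc}(0,T;W^{1,p'})$; for the remaining $\tfrac\alpha\beta\int_\Omega\bT\cdot\partial_t\beps(\bu-\bu_0)$ one inserts $\bw=\partial_t(\bu-\bu_0)\in W_0^{1,p'}(\Omega;\R^d)$ into \eqref{WF} to rewrite it as $\tfrac\alpha\beta\langle\bef,\partial_t(\bu-\bu_0)\rangle-\tfrac{\alpha}{2\beta}\ddt\|\partial_t\bu\|_2^2+\tfrac\alpha\beta(\partial_{tt}\bu,\partial_t\bu_0)$. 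Every surviving occurrence of $\partial_{tt}\bu$ — in $(\partial_{tt}\bu,\partial_{tt}\bu_0)$, $(\partial_{tt}\bu,\bef)$ and $(\partial_{tt}\bu,\partial_t\bu_0)$ — is handled by Young's inequality so that half of $\|\partial_{tt}\bu\|_2^2$ is absorbed, and the rest is bounded by $\int_\Omega|\bef|^2+|\partial_t\bu|^2+|\partial_{tt}\bu_0|^2+|\partial_t\bu_0|^2+|\bT\cdot\partial_t(\beta\partial_t\beps(\bu_0)+\alpha\beps(\bu_0))|\dx$, using the already available bounds on $\|\partial_t\bu\|_{L^2(Q)}$ and $\|\bT\|_{L^p(Q)}$.

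To conclude, I would integrate the resulting differential inequality
\[
\tfrac12\|\partial_{tt}\bu\|_2^2+\tfrac1\beta\ddt\int_\Omega F^*(\bG(\bT))\dx+\tfrac{\alpha}{2\beta}\ddt\|\partial_t\bu\|_2^2\le(\text{data terms})
\]
over $(0,t_1)$ with $t_1\in(\delta,T)$ against a Lipschitz cutoff $\eta$ satisfying $\eta\equiv1$ on $[\delta,T]$, $\eta\equiv0$ near $0$ and $|\eta'|\le C/\delta$; integration by parts in the two $\ddt$-terms leaves $\int_\Omega F^*(\bG(\bT(t_1)))$ and a nonnegative multiple of $\|\partial_t\bu(t_1)\|_2^2$ on the left and generates, via $\eta'$, exactly the contribution $\tfrac{C(\alpha,\beta)}{\delta}\int_0^\delta\int_\Omega F^*(\alpha\beps(\bu)+\beta\partial_t\beps(\bu))+|\partial_t\bu|^2\dx\dt$ on the right (recall $F^*(\bG(\bT))=F^*(\alpha\beps(\bu)+\beta\partial_t\beps(\bu))$ by \eqref{T-const}). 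Taking the supremum over $t_1$ gives \eqref{WFfinallocal}; for \eqref{WFfinal} one takes $\eta\equiv1$ on $[0,T]$ and identifies the values at $t=0$ through $\bu(0)=\bu_0(0)$ and $\partial_t\bu(0)=\partial_t\bu_0(0)$.

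The one genuinely delicate point — and the reason the lemma is phrased as a formal statement — is that for a weak solution $\partial_{tt}\bu$ lies only in $L^p(0,T;(W_0^{1,p'})^*)$, so $\partial_{tt}(\bu-\bu_0)$ is not literally an admissible test function. This is resolved by running the entire computation on the regularized Galerkin approximations \eqref{Gn}: there $\partial_{tt}\bu^n$ belongs to the span of the smooth basis, the chain rule for $\tfrac{\mathrm d}{\mathrm dt}F_n^*(\bG_n(\bT^n))$ is valid, all right-hand sides are bounded uniformly in $n$ by \eqref{AE-n}, and one passes to the limit using the weak convergence $\alpha\beps(\bu^n)+\beta\partial_t\beps(\bu^n)=\bG_n(\bT^n)\rightharpoonup\bG(\bT)$, weak lower semicontinuity of $\bA\mapsto\int_\Omega F^*(\bA)$, and the monotone convergence $F_n^*\nearrow F^*$; since $\bG$ is strictly monotone here the limit is the unique weak solution of Theorem~\ref{T1}, so the bound holds for it.
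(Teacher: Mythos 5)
Your proof is correct and follows essentially the same route as the paper's: test the momentum equation with $\partial_{tt}(\bu-\bu_0)$ and then with $\partial_t(\bu-\bu_0)$, identify $\int_\Omega\bT\cdot(\beta\partial_{tt}\beps(\bu)+\alpha\partial_t\beps(\bu))\dx$ with $\ddt\int_\Omega F^*(\bG(\bT))\dx$ via Legendre duality, absorb the $\partial_{tt}\bu$-terms by Young's inequality, and then integrate in time. The only cosmetic difference is in producing the local estimate: you use a Lipschitz time-cutoff and integrate by parts, whereas the paper integrates \eqref{WFt2} over $(\tau,t)$ and averages over $\tau\in(\delta/2,\delta)$; these are equivalent and both yield the $\delta^{-1}\int_0^\delta$ term.
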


\begin{proof}
Recalling  that  $\bef\in L^2(0,T;L^2(\Omega,\R^d))$, we set $\bw:=\partial_{tt}(\bu-\bu_0)$ in \eqref{WF} to observe  that, for almost all $t\in (0,T)$,
\begin{equation*}
\int_{\Omega} |\partial_{tt}\bu|^2+ \bT \cdot \partial_{tt}\beps(\bu)\dx = \int_{\Omega} \bef\cdot \partial_{tt}(\bu-\bu_0) + \partial_{tt}\bu\cdot \partial_{tt}\bu_0+\bT \cdot \partial_{tt}\beps(\bu_0)\dx.
\end{equation*}
This identity can be rewritten as
\begin{equation}
\begin{split}\label{wtf}
&\beta\|\partial_{tt}\bu\|_2^2+ \int_{\Omega}\bT \cdot (\beta \partial_{tt}\beps(\bu)+ \alpha\partial_t\beps(\bu))\dx \\
&\quad = \beta\int_{\Omega} \bef\cdot \partial_{tt}(\bu-\bu_0) + \partial_{tt}\bu\cdot \partial_{tt}\bu_0+\bT \cdot \partial_{tt}\beps(\bu_0)+\frac{\alpha}{\beta}\bT \cdot \partial_t\beps(\bu)\dx.
\end{split}
\end{equation}
First, we evaluate the last term on the right-hand side. Setting $\bw:=\partial_t(\bu-\bu_0)$ in \eqref{WF}, we see that
\begin{equation*}
\int_{\Omega}\bT \cdot \partial_{t}\beps(\bu)\dx =-\frac12 \ddt \|\partial_t \bu\|_2^2 + \int_{\Omega} \bef\cdot \partial_{t}(\bu-\bu_0) + \partial_{tt}\bu\cdot \partial_{t}\bu_0+\bT \cdot \partial_{t}\beps(\bu_0)\dx.
\end{equation*}
From the second term on the left-hand side of (\ref{wtf}), using \eqref{cons-law}, we see that
$$
\begin{aligned}
\int_{\Omega}\bT \cdot (\beta \partial_{tt}\beps(\bu)+ \alpha\partial_t\beps(\bu))\dx&=\int_{\Omega} \bT\cdot \partial_t \bG(\bT)\,\mathrm{d}x=\int_{\Omega}\partial_t(\bT\cdot \bG(\bT)) - \partial_t \bT \cdot \bG(\bT)\dx \\
&=\int_{\Omega}\partial_t(\bT\cdot \bG(\bT) - F(\bT))\dx=\ddt \int_{\Omega} F^*(\bG(\bT))\dx.
\end{aligned}
$$
Thus, using these two identities in \eqref{wtf} and applying Young's inequality, we obtain the following:
\begin{equation}\label{WFt2}
\begin{split}
&\ddt \left(\int_{\Omega} F^*(\bG(\bT))+\frac{\alpha |\partial_t \bu|^2}{2\beta}\dx  \right)+\frac{\beta}{2}\|\partial_{tt}\bu\|_2^2\\
&\quad \le  C(\alpha,\beta) (\|\bef\|_2^2 + \|\partial_t \bu\|_2^2 + \|\partial_{tt} \bu_0\|_2^2 + \|\partial_t \bu_0\|_2^2) + \frac{1}{\beta}\int_{\Omega} \bT \cdot \partial_t(\beta\partial_t \beps(\bu_0)+\alpha \beps(\bu_0)).
\end{split}
\end{equation}
Integrating (\ref{WFt2}) over \( (0, T)  \) and using the fact that
$$
 F^*(\bG(\bT(0)))=F^*(\alpha \beps(\bu_0) + \beta \partial_t \beps(\bu_0)),
$$
we deduce \eqref{WFfinal}. Similarly, integrating \eqref{WFt2} over $(\tau,t)$ where $\delta/2\le \tau\le \delta\le t\le T$ are arbitrary, we deduce that
\begin{equation}\label{WFt221}
\begin{split}
&\sup_{t\in (\delta,T)}\left(\int_{\Omega} F^*(\bG(\bT))+\frac{\alpha |\partial_t \bu|^2}{2\beta}\dx  \right)+\int_{\delta}^T\|\partial_{tt}\bu\|_2^2\,\mathrm{d}t\\
&\quad \le  C(\alpha,\beta) {\int_{\frac{\delta}{2}}^T \int_{\Omega}} |\bef|^2 + |\partial_t \bu|^2 + |\partial_{tt} \bu_0|^2 + |\partial_t \bu_0|^2 +|\bT \cdot \partial_t(\beta\partial_t \beps(\bu_0)+\alpha \beps(\bu_0))|\dx \dt \\
&\qquad + C(\alpha, \beta)\int_{\Omega}F^*(\alpha \beps(\bu(\tau)) + \beta \partial_t \beps(\bu(\tau))) + |\partial_t \bu(\tau)|^2\dx.
\end{split}
\end{equation}
Integrating with respect to $\tau \in (\delta/2,\delta)$ and dividing by $\delta$,  we directly obtain  \eqref{WFfinallocal}.
\end{proof}

\subsection{Spatial regularity}
Here, we will improve the spatial regularity of a weak solution. In particular,
we prove a weighted bound on $\nabla \bT$, which is a key tool for obtaining the existence of a weak solution  for the limiting strain model, i.e., in the case $p=1$.
\begin{lemma}
\label{reg-s}
Let all of the assumptions of Lemma~\ref{time-r} be satisfied. In addition, assume that $\partial_t \bu_0(0)\in W^{1,2}(\Omega;\R^d)$ and
$$
\int_0^T\int_{\Omega} |\mA(\bT)||\bT|^2 +|\mA(\bT)||\bef|^2\dx\dt< \infty.
$$
Then, for an arbitrary open set $\Omega' \subset \overline{\Omega'} \subset \Omega$ and any $\delta>0$, we have the following bound:
\begin{equation}\label{sp-r-l}
\begin{split}
&\sup_{t\in (\delta,T)} \|\partial_t \nabla \bu\|_{L^2(\Omega')} + \sum_{k=1}^d\int_{\delta}^T \int_{\Omega'}(\partial_k \bT, \partial_k \bT)_{\mA(\bT)} \dx \dt \\
&\quad \le  C(\Omega',\delta)\int_0^T\int_{\Omega} |\bT| |\bG(\bT)| + |\mA(\bT)||\bT|^2 + |\bef|^2 + |\nabla \bu|^2 +|\partial_t \nabla \bu|^2 +|\mA(\bT)||\bef|^2\dx\dt.
\end{split}
\end{equation}
If, additionally,  $\bu_0 \in \mathcal{C}^1([0,T]; W^{1,2}(\Omega;\R^d))$, then we also have
\begin{equation}\label{sp-r}
\begin{split}
&\sup_{t\in (0,T)} \|\partial_t \nabla \bu\|_{L^2(\Omega')} + \sum_{k=1}^d\int_0^T \int_{\Omega'}(\partial_k \bT, \partial_k \bT)_{\mA(\bT)} \dx \dt \\
&\quad \le  C(\Omega')\int_0^T\int_{\Omega} |\bT| |\bG(\bT)| + |\mA(\bT)||\bT|^2 + |\bef|^2 + |\nabla \bu|^2 +|\partial_t \nabla \bu|^2 +|\mA(\bT)||\bef|^2\dx\dt\\
&\qquad + C\|\partial_t \nabla \bu_0(0)\|_2^2.
\end{split}
\end{equation}

\end{lemma}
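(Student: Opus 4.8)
The plan is to adapt the energy method used in the proof of Theorem~\ref{T1} and of Lemma~\ref{time-r}, now applied to the system differentiated once in a spatial direction and localised away from $\partial\Omega$. We argue formally: fix $\Omega'$ with $\overline{\Omega'}\subset\Omega$ and a cut-off $\eta\in\mathcal{C}^\infty_0(\Omega)$ with $0\le\eta\le1$ and $\eta\equiv1$ on $\Omega'$, and formally differentiate \eqref{linear-moment} with respect to $x_k$. (Rigorously one replaces $\partial_k$ by the difference quotient $D_k^h$, so that the test functions used below, which contain $\eta^2 D_k^h\bu$ and $\eta^2 D_k^h\partial_t\bu$, lie in $W_0^{1,p'}(\Omega;\R^d)$ for $|h|$ small; one then lets $h\to0$, using monotonicity of $\bG$ and weak lower semicontinuity to recover the nonnegative term $\int_\Omega\eta^2(\partial_k\bT,\partial_k\bT)_{\mA(\bT)}$ as the limit of $\int_\Omega\eta^2\,D_k^h\bT\cdot D_k^h\bG(\bT)$.) Note that the bound $\partial_{tt}\bu\in L^2_{loc}(0,T;L^2(\Omega;\R^d))$ from Lemma~\ref{time-r} is what legitimises the energy identity below for $t$ bounded away from $0$, which is the reason for the localisation in time in \eqref{sp-r-l}.

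Differentiating \eqref{linear-moment} in $x_k$, testing with $\eta^2(\beta\partial_t\partial_k\bu+\alpha\partial_k\bu)$, and summing over $k$, the inertial term produces $\ddt$ of $\tfrac\beta2\int_\Omega\eta^2|\partial_t\nabla\bu|^2+\alpha\int_\Omega\eta^2\partial_t\nabla\bu\cdot\nabla\bu$, which, exactly as in the proof of Theorem~\ref{T1}, is rewritten as $\ddt$ of the coercive quadratic form $\tfrac\beta4\int_\Omega\eta^2\bigl(|\partial_t\nabla\bu|^2+|\partial_t\nabla\bu+\tfrac{2\alpha}\beta\nabla\bu|^2\bigr)$ minus $\tfrac{\alpha^2}\beta\ddt\int_\Omega\eta^2|\nabla\bu|^2$, the last term being moved to the right-hand side and dominated by $\int_\Omega\eta^2(|\nabla\bu|^2+|\partial_t\nabla\bu|^2)$; this is how $\sup_{t}\|\partial_t\nabla\bu\|_{L^2(\Omega')}$ is controlled. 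The stress term, after one integration by parts, equals $\int_\Omega\eta^2\partial_k\bT\cdot\nabla(\beta\partial_t\partial_k\bu+\alpha\partial_k\bu)$ plus a commutator carrying $\nabla\eta^2$; since $\bT$ is symmetric and, by \eqref{cons-law} differentiated in $x_k$, $\beps(\beta\partial_t\partial_k\bu+\alpha\partial_k\bu)=\partial_k\bG(\bT)=\mA(\bT)\partial_k\bT$, the first piece equals $\sum_k\int_\Omega\eta^2(\partial_k\bT,\partial_k\bT)_{\mA(\bT)}\ge0$, which, since $\eta\equiv1$ on $\Omega'$ and the integrand is nonnegative, dominates the good term $\sum_k\int_{\Omega'}(\partial_k\bT,\partial_k\bT)_{\mA(\bT)}$ on the left of \eqref{sp-r-l}--\eqref{sp-r}.

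The main difficulty — and the reason the right-hand sides have the stated form — is that no coercivity or growth hypothesis on $\mA$ is available: the only tool for a term carrying $\partial_k\bT$ is the (possibly degenerate) seminorm $(\cdot,\cdot)_{\mA(\bT)}^{1/2}$, so such terms cannot be absorbed into $\int\eta^2|\nabla\bT|^2$. The commutator terms carrying $\nabla\eta^2$, and the body-force term $\langle\partial_k\bef,\eta^2(\beta\partial_t\partial_k\bu+\alpha\partial_k\bu)\rangle$ (which cannot be treated by differentiating $\bef$, since $\bef\in L^2$ only), are handled by integrating by parts once more to move the spatial derivative off $\bT$, respectively off $\bef$; the second-order spatial derivatives of $\bu$ that then appear, after summation over $k$, combine into $\Delta(\beta\partial_t\bu+\alpha\bu)$, and we re-express these via $\beps(\beta\partial_t\bu+\alpha\bu)=\bG(\bT)$ as a combination of $\diver\bG(\bT)$ and $\nabla\tr\bG(\bT)$, i.e.\ as quantities that are linear in $\mA(\bT)\nabla\bT$. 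Pairing these against $\bT$, respectively $\bef$, and applying the Cauchy--Schwarz inequality for $(\cdot,\cdot)_{\mA(\bT)}$ together with $(\bW,\bW)_{\mA(\bT)}\le|\mA(\bT)|\,|\bW|^2$, we absorb an $\varepsilon$-fraction of $\sum_k\int_\Omega\eta^2(\partial_k\bT,\partial_k\bT)_{\mA(\bT)}$ into the left-hand side and are left with remainders controlled by $\int_Q(|\mA(\bT)||\bT|^2+|\mA(\bT)||\bef|^2)$; the genuinely lower-order commutator pieces are bounded, using \eqref{A2}--\eqref{A3} and Young's inequality, by $\int_Q(|\bT||\bG(\bT)|+|\nabla\bu|^2+|\partial_t\nabla\bu|^2+|\bef|^2)$.

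Collecting all the terms, choosing $\varepsilon$ small, integrating over $(\tau,t)$ and invoking Gr\"{o}nwall's lemma — the right-hand side being finite by the hypothesis $\int_Q(|\mA(\bT)||\bT|^2+|\mA(\bT)||\bef|^2)<\infty$ together with the a~priori bounds of Theorem~\ref{T1} and Lemma~\ref{time-r} — yields \eqref{sp-r} on taking $\tau=0$, which requires $\bu_0\in\mathcal{C}^1([0,T];W^{1,2}(\Omega;\R^d))$ and $\partial_t\nabla\bu_0(0)\in L^2(\Omega)$ so that the initial value of the quadratic form is finite; and it yields \eqref{sp-r-l} on integrating $\tau$ over $(\delta/2,\delta)$ and dividing by $\delta$, exactly the argument used to pass from \eqref{WFt221} to \eqref{WFfinallocal} in the proof of Lemma~\ref{time-r}. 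The delicate part of the whole argument is the bookkeeping of the previous paragraph: because $\mA$ is allowed to degenerate, every occurrence of $\nabla\bT$ outside the good term must be relocated by integration by parts and then re-created, in $\mA(\bT)$-weighted form, through the momentum balance and the constitutive relation before it can be absorbed.
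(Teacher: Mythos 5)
Your proposal is correct and follows essentially the same strategy as the paper's proof: the paper tests the undifferentiated weak form \eqref{WF} with $\bw:=-\diver(\varphi^2\nabla\partial_t\bu)$ and $\bw:=-\diver(\varphi^2\nabla\bu)$, which is the adjoint reformulation of your "differentiate in $x_k$ and test with $\eta^2(\beta\partial_t\partial_k\bu+\alpha\partial_k\bu)$", and then expands the main stress term into the good quadratic form in the $\mA$-seminorm plus commutator pieces that are relocated by one further integration by parts, re-expressed through the constitutive law (via $\partial_{ij}\bu_k=\partial_i\beps_{jk}+\partial_j\beps_{ik}-\partial_k\beps_{ij}$) and, in one place, the momentum balance, and then absorbed by the $\mA$-weighted Cauchy--Schwarz inequality exactly as you describe; the time-averaging over $\tau\in(\delta/2,\delta)$ that yields \eqref{sp-r-l} is likewise the same device as in Lemma~\ref{time-r}. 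The only point you gloss over, also left implicit in the paper, is that one of the commutator terms ($I_4$ in the paper's notation) carries $\diver\bT$, which is replaced via \eqref{linear-moment} by $\partial_{tt}\bu-\bef$ and hence introduces $\|\partial_{tt}\bu\|_2^2$; this is precisely why the hypotheses of Lemma~\ref{time-r} must be carried along, as you already note but for the slightly different reason of justifying the energy identity.
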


\begin{proof}
Fix an arbitrary nonnegative smooth compactly supported $\varphi\in \mathcal{C}^{\infty}_0(\Omega)$. Then, we can choose $\bw:= -\diver (\varphi^2 \nabla \partial_t \bu)$ in \eqref{WF}  and integrate by parts to deduce the following identity:
\begin{equation}
\label{startr}
\begin{split}
\frac{\beta}{2}\ddt \int_{\Omega}|\partial_t \nabla \bu \varphi|^2\dx  + \int_{\Omega}\sum_{i,j,k=1}^d \partial_{k}\bT_{ij} \partial_j(\varphi^2 \beta \partial_t \partial_k \bu_i) \dx = -\beta\int_{\Omega} \bef\cdot \diver (\varphi^2 \nabla \partial_t \bu)\dx.
\end{split}
\end{equation}
Similarly, setting $\bw:= -\diver (\varphi^2 \nabla  \bu)$ in \eqref{WF} leads to
\begin{equation}
\label{startr2}
\begin{split}
&\alpha \ddt \int_{\Omega} \partial_{t}\nabla \bu \cdot \nabla \bu \varphi^2\dx    + \int_{\Omega}\sum_{i,j,k=1}^d \partial_{k}\bT_{ij} \partial_j(\varphi^2 \alpha \partial_k \bu_i) \dx \\
&\qquad = -\alpha\int_{\Omega} \bef\cdot \diver (\varphi^2 \nabla \bu)\dx+\alpha\int_{\Omega} |\partial_{t}\nabla\bu \varphi|^2\dx.
\end{split}
\end{equation}
Summing these two identities, we deduce that
\begin{equation}
\label{startr3}
\begin{split}
\frac{\beta}{4}\ddt \int_{\Omega}|\partial_t \nabla \bu \varphi|^2 +\left|\partial_t \nabla \bu \varphi+\frac{2\alpha}{\beta}\nabla \bu\varphi\right|^2\dx    + \int_{\Omega}\sum_{i,j,k=1}^d \partial_{k}\bT_{ij} \partial_j(\varphi^2 (\alpha \partial_k \bu_i+\beta \partial_t \partial_k \bu_i)) \dx  \\
= -\int_{\Omega} \bef\cdot \diver (\varphi^2 (\beta\nabla \partial_t \bu+\alpha\nabla \bu))\dx+\frac{2\alpha^2}{\beta} \int_{\Omega}\partial_{t}\nabla \bu\cdot \nabla \bu \varphi^2\dx+\alpha\int_{\Omega} |\partial_{t}\nabla\bu \varphi|^2\dx.
\end{split}
\end{equation}
Now we  show that the second integral on the left-hand side is the key source of information. We use \eqref{cons-law}, integration by parts and the symmetry of $\bT$ in order  to observe that
\begin{equation}\label{sttt}
\begin{aligned}
&\int_{\Omega}\sum_{i,j,k=1}^d \partial_{k}\bT_{ij} \partial_j(\varphi^2 (\alpha \partial_k \bu_i+\beta \partial_t \partial_k \bu_i)) \dx\\
&=\sum_{i,j,k=1}^d \int_{\Omega}\partial_{k}\bT_{ij} (\varphi^2 (\alpha \partial_k \partial_j\bu_i+\beta \partial_t \partial_k \partial_j\bu_i)) +2\partial_{k}\bT_{ij} \varphi\partial_j\varphi (\alpha \partial_k \bu_i+\beta \partial_t \partial_k \bu_i) \dx\\
&=\sum_{i,j,k=1}^d \int_{\Omega}\partial_{k}\bT_{ij} \varphi^2 \partial_k(\alpha \beps_{ij}(\bu)+\beta \partial_t \beps_{ij}(\bu)) +4\partial_{k}\bT_{ij} \varphi\partial_j\varphi (\alpha \beps_{ik}(\bu)+\beta \partial_t \beps_{ik}(\bu)) \dx\\
&\quad -2\sum_{i,j,k=1}^d \int_{\Omega}\partial_{k}\bT_{ij} \varphi\partial_j\varphi (\alpha \partial_i \bu_k+\beta \partial_t \partial_i \bu_k) \dx\\
&=\sum_{i,j,k=1}^d \int_{\Omega}\partial_{k}\bT_{ij} \varphi^2 \partial_k\bG_{ij}(\bT) -4\bT_{ij} \partial_{k}(\varphi\partial_j\varphi) \bG_{ik}(\bT)-4\bT_{ij} \varphi\partial_j\varphi \partial_{k}\bG_{ik}(\bT) \dx\\
&\quad +\sum_{i,j,k=1}^d \int_{\Omega}\bT_{ij} \partial_{kj}(\varphi^2 )\partial_i(\alpha  \bu_k+\beta \partial_t  \bu_k) \dx+2\sum_{i,j,k=1}^d \int_{\Omega}\bT_{ij} \varphi\partial_j\varphi \partial_i(\alpha  \partial_{k}\bu_k+\beta \partial_t  \partial_{k}\bu_k)\dx\\
&=\int_{\Omega}\sum_{k=1}^d (\partial_k \bT \varphi, \partial_k \bT \varphi)_{\mA(\bT)} -4\sum_{i,j,k=1}^d \bT_{ij} \partial_{k}(\varphi\partial_j\varphi) \bG_{ik}(\bT)-4\sum_{i,j,k=1}^d \bT_{ij} \varphi\partial_j\varphi \partial_{k}\bG_{ik}(\bT) \dx\\
&\quad -\sum_{i,j,k=1}^d \int_{\Omega}\partial_j\bT_{ij} \partial_{k}(\varphi^2 )\partial_i(\alpha  \bu_k+\beta \partial_t  \bu_k) \dx-\sum_{i,j,k=1}^d \int_{\Omega}\bT_{ij} \partial_{k}(\varphi^2 )\partial_{ij}(\alpha  \bu_k+\beta \partial_t  \bu_k) \dx\\
&\quad +2\sum_{i,j,k=1}^d \int_{\Omega}\bT_{ij} \varphi\partial_j\varphi \partial_i \bG_{kk}(\bT)\dx
\\&=: \sum_{m=1}^6 I_m.
\end{aligned}
\end{equation}
We need to determine what bounds can be deduced from \eqref{sttt}. In particular, we show that the terms $I_2, \ldots, I_6$ can be bounded in terms of $I_1$ and the data. The simplest bound  is for $I_2$. In particular,  it directly follows that
$$
|I_2| \le C(\varphi) \int_{\Omega} |\bT|\, |\bG(\bT)|\dx.
$$
Letting $\delta_{nk}$ denote the Kronecker delta, in order to bound \( I_3\) we first rewrite it in the following way:
$$
\begin{aligned}
\sum_{i,j,k=1}^d \bT_{ij} \varphi\partial_j\varphi \partial_{k}\bG_{ik}(\bT)&=\sum_{i,j,k,l,m,n=1}^d \delta_{nk}\bT_{ij} \varphi\partial_j\varphi \mA^{ik}_{lm}(\bT)\partial_{n}\bT_{lm}\\
&=\sum_{j,n=1}^d \left(\sum_{i,k,l,m=1}^d \mA^{ik}_{lm}(\bT)\partial_{n}\bT_{lm}  \delta_{nk}\bT_{ij} \varphi\partial_j\varphi\right).
\end{aligned}
$$
Using the Cauchy--Schwarz inequality and the fact that $\mA$ generates a scalar product, by applying Young's inequality we find that
$$
\begin{aligned}
|I_3| &\le C \int_{\Omega}\left|\sum_{j,n=1}^d \left(\sum_{i,k,l,m=1}^d \mA^{ik}_{lm}(\bT)\partial_{n}\bT_{lm}  \delta_{nk}\bT_{ij} \varphi\partial_j\varphi\right) \right|\dx \\
&\le C \int_{\Omega}\left|\sum_{j,n=1}^d \left(\sum_{i,k,l,m=1}^d \mA^{ik}_{lm}(\bT)\partial_{n}\bT_{lm} \varphi \partial_{n}\bT_{ik} \varphi  \right)^{\frac12}\left(\sum_{i,k,l,m=1}^d \mA^{ik}_{lm}(\bT) \delta_{nm}\bT_{lj} \partial_j \varphi \delta_{nk}\bT_{ij} \partial_j\varphi\right)^{\frac12} \right|\dx\\
&\le \frac{I_1}{8} +C(\varphi) \int_{\Omega}|\mA(\bT)| |\bT|^2\dx.
\end{aligned}
$$
The term $I_6$ can be bounded in a very similar way. In particular, we have
$$
|I_6|\le \frac{I_1}{8} +C(\varphi) \int_{\Omega}|\mA(\bT)| |\bT|^2\dx.
$$
For $I_4$, we use the equation \eqref{linear-moment} and  Young's inequality to obtain
$$
\begin{aligned}
|I_4|&=\left|\sum_{i,k=1}^d \int_{\Omega}(\bef_i -\partial_{tt} \bu_i) \partial_{k}(\varphi^2 )\partial_i(\alpha  \bu_k+\beta \partial_t  \bu_k) \dx\right|\\
&\le C(\varphi) \int_{\Omega} |\bef|^2 + |\partial_{tt}\bu|^2 + |\partial_t \nabla \bu \varphi|^2 + |\nabla \bu \varphi|^2 \dx.
\end{aligned}
$$
Finally, to evaluate $I_5$, we first recall the following identity
\begin{equation}
\begin{aligned}
&\partial_{ij}(\alpha  \bu_k+\beta \partial_t  \bu_k)\\
&= \partial_i(\alpha\beps_{jk}(\bu)+\beta \partial_t\beps_{jk}(\bu)) +\partial_j(\alpha\beps_{ik}(\bu)+\beta \partial_t\beps_{ik}(\bu))-\partial_k(\alpha\beps_{ij}(\bu)+\beta \partial_t\beps_{ij}(\bu)).
\end{aligned}\label{ID12}
\end{equation}
Then, we can rewrite $I_5$ with the help of \eqref{cons-law} to find that
$$
I_5=-\sum_{i,j,k=1}^d \int_{\Omega}\bT_{ij} \partial_{k}(\varphi^2 )\left(\partial_i \bG_{jk}(\bT) +\partial_j \bG_{ik}(\bT)-\partial_k\bG_{ij}(\bT)\right) \dx.
$$
Hence, we see that we are in the same situation as with the term $I_3$ and we can deduce that
$$
|I_5|\le \frac{I_1}{8} +C(\varphi) \int_{\Omega}|\mA(\bT)| |\bT|^2\dx.
$$
Thus we have suitable bounds on the left-hand side of (\ref{startr3}).
Next we rewrite the first term on the right-hand side of \eqref{startr3} in the following way:
\begin{equation*}
\begin{split}
&\int_{\Omega} \bef\cdot \diver (\varphi^2 (\alpha \nabla \bu +\beta \partial_t \nabla \bu))\dx\\
&=\sum_{i,j=1}^d \int_{\Omega} \bef_i (\partial_j (\varphi^2) (\alpha \partial_j \bu_i +\beta \partial_t \partial_j\bu_i)+\varphi^2 (\alpha \partial_{jj}\bu_i +\beta \partial_t \partial_{jj} \bu_i))\dx\\
&=\sum_{i,j=1}^d \int_{\Omega} \bef_i (\partial_j (\varphi^2) (\alpha \partial_j \bu_i +\beta \partial_t \partial_j\bu_i)+\varphi^2(2\partial_j \bG_{ij}(\bT)-\partial_i \bG_{jj}(\bT))\dx.
\end{split}
\end{equation*}
Hence, using Young's inequality in the first term and a procedure similar to the one used for $I_3$ in the second, we get
\begin{equation}
\label{ops}
\begin{split}
&\left|\int_{\Omega} \bef\cdot \diver (\varphi^2 (\alpha \nabla \bu +\beta \partial_t \nabla \bu))\dx\right|\\
&\quad \le \frac{I_1}{8}+C(\varphi)\int_{\Omega} |\bef|^2 + |\nabla \bu|^2 +|\partial_t \nabla \bu|^2 +|\mA(\bT)||\bef|^2\dx.
\end{split}
\end{equation}

Substituting the above bounds into \eqref{startr3} and using a similar procedure to the one used in the proof of Lemma~\ref{time-r}, we deduce \eqref{sp-r} and \eqref{sp-r-l}.
\end{proof}

\section{Limiting strain - Proof of Theorem~\ref{T4.1}}\label{limiting}



As in the proof of Theorem \ref{T1}, in order to prove Theorem \ref{T4.1} we first introduce an approximate problem. However, we are able to make  use of the knowledge obtained from Theorem \ref{T1}. Indeed, we define a function on \( \Rsym\) by
\begin{equation}\label{Gndf}
\bG^n(\bT):= \bG(\bT) + n^{-1} \bT.
\end{equation}
Since $\bG$ satisfies \eqref{A1}--\eqref{A3} with $p=1$, it is evident that $\bG^n$ satisfies \eqref{A1}--\eqref{A3} with $p=2$. Therefore, thanks to Theorem~\ref{T1}, we know that there exists a couple $(\bu^n,\bT^n)$, fulfilling\footnote{We assume a slightly different restriction on $\bu_0$ than in~Theorem~\ref{T1}. However, the proof of Theorem~\ref{T1} can be easily adapted  to this case.}
\begin{align}
\bu^n &\in \mathcal{C}^{1}([0, T]; L^{2}(\Omega;\R^d))\cap W^{1, 2}(0, T; W^{1,2}(\Omega;\R^d))\cap  W^{2,2}(0, T; (W_{0}^{1,2 }(\Omega;\R^d))^{*}), \label{FSun}\\
\bT^n &\in L^{2}(0, T; L^{2}(\Omega;\Rsym))\label{FSTn}
\end{align}
and satisfying
\begin{equation}\label{WFn}
\langle \partial_{tt}\bu^n, \bw \rangle + \int_{\Omega} \bT^n \cdot \nabla \bw \dx = \int_{\Omega} \bef \cdot \bw\dx \qquad \forall\, \bw \in W_{0}^{1,2}(\Omega;\R^d)\quad \text{for a.e. }\,t \in (0, T),
\end{equation}
and
\begin{equation}\label{T-constn}
\alpha \beps(\bu^n) + \beta \partial_{t} \beps(\bu^n)=\bG^n(\bT^n)= \bG(\bT^n) + n^{-1}\bT^n \quad \textrm{a.e. in }Q.
\end{equation}
We note that we can replace the duality pairing by the integral over $\Omega$ in the term containing $\bef$ thanks to the assumed regularity of $\bef$.
Moreover, we know that\footnote{In case that $\Omega$ is not a Lipschitz domain, the identity below is not understood in the sense of traces but in the sense that $\bu-\bu_0 \in W^{1,1}_0(\Omega;\R^d)$ for almost all $t\in (0,T)$, where $W^{1,1}_0(\Omega;\R^d)$ defined as the closure of $C^\infty_0(\Omega;\R^d)$ in the norm of $W^{1,1}(\Omega;\R^d)$.}
$$
\bu^n=\bu_0 \quad \textrm{ on } \Gamma \cup (\{0\}\times \Omega), \qquad \partial_t\bu^n=\partial_t\bu_0 \quad \textrm{ on } \{0\}\times \Omega.
$$
We want to consider the limit as  $n\to \infty$ in order  to prove the existence of a solution to the limiting strain problem  in the sense of Theorem~\ref{T4.1}.

\subsection{A~priori $n$-independent bounds}
We start with bounds that are independent of the order of approximation. For this purpose, we use and mimic many steps from preceding sections. We start with the first uniform bound. Setting $\bw:=\beta \partial_t(\bu^n-\bu_0)+\alpha(\bu-\bu_0)$ in \eqref{WFn},  after exactly the same algebraic manipulations as those used for  \eqref{test3} we deduce that
\begin{equation}\label{test3n}
\begin{split}
 &\frac{1}{4} \ddt\int_{\Omega}\beta|\partial_{t} (\bu^n-\bu_0)|^2 +\beta\left|\partial_{t} (\bu^n-\bu_0)+\frac{2\alpha}{\beta}(\bu^n-\bu_0)\right|^2\dx+  \int_{\Omega} \bG^n(\bT^n)\cdot \bT^n  \dx \\
 &=\int_{\Omega} \bT^n\cdot (\alpha \beps(\bu_0)+\beta\partial_t \beps(\bu_0)) \dx  +\alpha \int_{\Omega} |\partial_{t} (\bu^n-\bu_0)|^{2}\,\mathrm{d}x\\
 &\quad +\int_{\Omega}(\bef-\partial_{tt} \bu_0) \cdot (\alpha(\bu^n-\bu_0)+ \beta\partial_t (\bu^n-\bu_0))\dx+ \frac{2\alpha^2}{\beta}\int_{\Omega}\partial_t (\bu^n-\bu_0) \cdot (\bu^n-\bu_0)\dx.
 \end{split}
\end{equation}
In order to obtain the required a~priori estimate, we need to use the safety strain condition. In particular, it follows from \eqref{compt} that there exists a $\delta>0$ such that
\begin{equation}
|\alpha \beps(\bu_0)+\beta\partial_t \beps(\bu_0)|\le L-2\delta \qquad \textrm{a.e. in }Q, \label{compt-A}
\end{equation}
where $L$ is as defined in \eqref{dfL2}. In addition, if we define \( F(\bT) := \phi(|\bT|) \), it  follows from the convexity of $\phi$  that, for any $\tilde{\delta}>0$, there exists a $C_{\tilde{\delta}}$ such that, for all $\bT\in \R^{d\times d}_{sym}$,
\begin{equation}
F(\bT)\ge (L-\tilde{\delta})|\bT| -C_{\tilde{\delta}}.\label{lowerbo}
\end{equation}
We choose \( \tilde{\delta} = \delta\) as in (\ref{compt-A}) and let \( C_\delta\) be the corresponding constant from (\ref{lowerbo}).
Since $C_{\delta}$ depends in principle on $\bu_0$ and $F$, and \( \delta\) is now given, we do not trace the dependence of $C$ on $\delta$ in what follows.
Consequently, for the second term on the left-hand side of \eqref{test3n}, we can use \eqref{basic-e2} and \eqref{T-constn} to deduce that
$$
\bG^n(\bT^n)\cdot \bT^n = n^{-1}|\bT^n|^2 + F(\bT^n) + F^*(\bG(\bT^n))\ge (L-\delta)|\bT^n|+ n^{-1}|\bT^n|^2 -C(\delta).
$$
Furthermore, the first term on the right-hand side of \eqref{test3n} can be  bounded by using \eqref{compt-A} in the following way:
$$
\int_{\Omega} \bT^n\cdot (\alpha \beps(\bu_0)+\beta\partial_t \beps(\bu_0)) \dx\le (L-2\delta)\|\bT^n\|_1.
$$
Therefore, it follows from \eqref{test3n}, the above bounds and  H\"{o}lder's inequality that
\begin{equation}\label{test3n2}
\begin{split}
 &\frac{1}{4} \ddt\int_{\Omega}\beta|\partial_{t} (\bu^n-\bu_0)|^2 +\beta\left|\partial_{t} (\bu^n-\bu_0)+\frac{2\alpha}{\beta}(\bu^n-\bu_0)\right|^2\dx+  \delta \|\bT^n\|_1+ n^{-1}\|\bT^n\|_2^2\\
 &\le C \left( \int_{\Omega}\beta|\partial_{t} (\bu^n-\bu_0)|^2 +\beta\left|\partial_{t} (\bu^n-\bu_0)+\frac{2\alpha}{\beta}(\bu^n-\bu_0)\right|^2\dx+\|\bef\|_2^2 + \|\partial_{tt}\bu_0\|_2^2 +1\right).
 \end{split}
\end{equation}
The application of Gr\"{o}nwall's lemma then leads to
\begin{equation}\label{aen1}
\sup_{t\in (0,T)} \left(\|\partial_t \bu(t)\|_2^2 + \|\bu(t)\|_2^2\right) + \int_0^T \|\bT^n\|_1 + n^{-1}\|\bT^n\|_2^2 \dt \le C(\bef,\bu_0),
\end{equation}
where we have used the assumption \eqref{data-as2} on the data. It also follows from \eqref{T-const} and the above bound  that
$$
\int_{Q}|\alpha \beps(\bu)+\beta\partial_t \beps(\bu)|^2 \dx \dt \le \int_{Q} (L+n^{-1}|\bT^n|)^2 \dx \dt \le C(\bef,\bu_0).
$$
Consequently, since $\beps(\bu(0))\in L^{\infty}(\Omega,\R^{d\times d})$ and $\bG^n(\bT^n)\cdot \bT^n$ is nonnegative, arguing similarly as in the bound  \eqref{tt3}, we deduce with the help of Korn's inequality and \eqref{test3n} that
\begin{equation}
\int_Q |\bG^n(\bT^n)\cdot \bT^n|\dx\dt +\int_0^T \|\partial_t\bu^n\|_{1,2}^2 + \|\bu^n\|_{1,2}^2\dt\le C(\bef,\bu_0).\label{aen2}
\end{equation}

\subsection{Regularity via $n$-independent bounds}
The bounds \eqref{aen1}, \eqref{aen2} are  not sufficient to pass to the limit $n\to \infty$, since we only have a~priori control on $\bT^n$ in a nonreflexive space $L^1(Q;\R^{d \times d})$. In particular, at best we have that  the weak star limit of $\bT^n$ is  a measure. Therefore, the pointwise relation  \eqref{T-const2} is neither meaningful nor likely to be valid in this case.  Instead, we improve our information by using the regularity technique introduced in Section~\ref{regularity}. Namely, we use Lemma~\ref{time-r} and Lemma~\ref{reg-s}. First, we define an approximatinon $F_n$ of the potential $F$ by
$$
F_n(\bT):= F(\bT) + \frac{|\bT|^2}{2n}.
$$
Then we  have that
$$
\frac{\partial F^n(\bT)}{\partial \bT}=\bG_n(\bT) = \bG(\bT) + n^{-1}\bT.
$$
We may now apply the results from Section~\ref{regularity} with $p=2$, replacing $(\bu, F, \bG)$ with the triple  $(\bu^n, F_n, \bG_n)$.  We note that using the definition of $\bG_n$ we may define~\( \mathcal{A}_n \) in an analogous way to~\( \mathcal{A}\). In particular, we write
$$
\begin{aligned}
(\mA_n(\bT^n))_{ijkl}&:=\frac{\partial}{\partial\bT^n_{kl}}\left(\frac{\phi'(|\bT^n|)}{|\bT^n|}\bT^n_{ij} + n^{-1} \bT^n_{ij}\right)\\
&=\delta_{ik}\delta_{jl} \left(n^{-1}+\frac{\phi'(|\bT^n|)}{|\bT^n|}\right)+ \left(\frac{\phi''(|\bT^n|)|\bT^n| - \phi'(|\bT^n|)}{|\bT^n|}\right)\frac{\bT^n_{ij}\bT^n_{kl}}{|\bT^n|^2}.
\end{aligned}
$$
Consequently, using the fact that $\phi'(0)=0$ and  $\phi''(s)\le C(1+s)^{-1}$, we see that
\begin{equation}\label{Abbound}
|\mA_n(\bT^n)| \le Cn^{-1} +\frac{C}{1+|\bT^n|}.
\end{equation}

With this in mind, let us first discuss regularity with respect to time. We see that all assumptions of Lemma~\ref{time-r} are satisfied. Therefore we have,  for every~$\delta>0$, that the following inequality holds:
\begin{equation}\label{WFfinallocaln}
\begin{split}
&\sup_{t\in (\delta,T)}\int_{\Omega} F_n^*(\bG_n(\bT^n))\dx+\int_{\delta}^T\|\partial_{tt}\bu^n\|_2^2\dt\\
& \le  C(\alpha,\beta) \left(\int_{\frac{\delta}{2}}^T\int_{\Omega} |\bef|_2^2 + |\partial_t \bu^n|_2^2 + |\partial_{tt} \bu_0|_2^2 + |\partial_t \bu_0|_2^2+|\bT^n \cdot \partial_t(\beta\partial_t \beps(\bu_0)+\alpha \beps(\bu_0))|\dx \dt\right)\\
&\quad  +\frac{C(\alpha,\beta)}{\delta}\int_0^{\delta}\int_{\Omega}F_n^*(\alpha \beps(\bu^n(\tau)) + \beta \partial_t \beps(\bu^n(\tau)))+|\partial_t \bu^n(\tau)|^2\dx \dtau.
\end{split}
\end{equation}
We focus on the bound  of the right-hand side. For the second integral on the right-hand side, it follows from the properties of the convex conjugate function and the uniform bounds \eqref{aen1}, \eqref{aen2} that
$$
\begin{aligned}
&\int_0^{\delta}\int_{\Omega}F_n^*(\alpha \beps(\bu^n) + \beta \partial_t \beps(\bu^n))+|\partial_t \bu^n|^2\dx \dtau
 =\int_0^{\delta}\int_{\Omega}F_n^*(\bG_n(\bT^n))+|\partial_t \bu^n|^2\dx \dtau
\\&\quad
\le \int_{Q}\bG_n(\bT^n)\cdot \bT^n+|\partial_t \bu^n|^2\dx \dt \le C(\bu_0,\bef).
\end{aligned}
$$
For the first term on the right-hand side of \eqref{WFfinallocaln}, we use H\"{o}lder's inequality, the assumptions on the data \eqref{data-as2}, \eqref{compt}, \eqref{compt2} and the uniform bound  \eqref{aen1} in order to deduce that
$$
\begin{aligned}
&\int_{\frac{\delta}{2}}^T\int_{\Omega} |\bef|_2^2 + |\partial_t \bu^n|_2^2 + |\partial_{tt} \bu_0|_2^2 + |\partial_t \bu_0|_2^2+|\bT^n \cdot \partial_t(\beta\partial_t \beps(\bu_0)+\alpha \beps(\bu_0))|\dx \dt\\
&\quad \le C(\bu_0,\bef) + \|| \partial_{tt}\beps(\bu_0)|+|\partial_t\beps(\bu_0)|\|_{L^{\infty}((\frac{\delta}{2},T)\times \Omega)}\int_{0}^T\int_{\Omega} |\bT^n|\dx \dt
\\&\quad
\le C(\bu_0, \bef).
\end{aligned}
$$
It follows from the above bounds and  \eqref{WFfinallocaln} that, for every $\delta>0$, we have
\begin{equation}\label{WFfinallocalnn}
\begin{split}
&\sup_{t\in (\delta,T)}\int_{\Omega} F_n^*(\bG_n(\bT^n))\dx+\int_{\delta}^T\|\partial_{tt}\bu^n\|_2^2\dt\le C(\bef,\bu_0).
\end{split}
\end{equation}

Similarly, in case that \eqref{compt2} holds even for $\delta=0$, we can use \eqref{WFfinal}. By a very similar computation to the one above we deduce that
\begin{equation}
\begin{split}
&\sup_{t\in (0,T)}\int_{\Omega} F_n^*(\bG_n(\bT^n))\dx+\int_0^T\|\partial_{tt}\bu^n\|_2^2\dt\\
&\quad \le  C(\bef,\bu_0) + C\int_{\Omega}F_n^*(\alpha \beps(\bu_0(0)) + \beta \partial_t \beps(\bu_0(0)))\dx \\
&\quad \le  C(\bef,\bu_0) + C\int_{\Omega}F^*(\alpha \beps(\bu_0(0)) + \beta \partial_t \beps(\bu_0(0)))\dx \\
&\quad \le C(\bef,\bu_0),
\end{split}\label{WFfinalnnn}
\end{equation}
using the fact that $F_n^* \le F^*$ and  assumptions \eqref{compt}, \eqref{compt2} with $\delta=0$.

Next, we consider the spatial regularity estimates. For an arbitrary open set $\Omega' \subset \overline{\Omega'} \subset \Omega$ and for any $\delta>0$, it follows from \eqref{sp-r-l} that
\begin{equation}\label{sp-r-ln}
\begin{split}
&\sup_{t\in (\delta,T)} \|\partial_t \nabla \bu^n\|_{L^2(\Omega')} + \sum_{k=1}^d\int_{\delta}^T \int_{\Omega'}(\partial_k \bT^n, \partial_k \bT^n)_{\mA_n(\bT^n)} \dx \dt \\
&\quad \le  C(\Omega',\delta)\int_Q |\bT^n| |\bG_n(\bT^n)| + |\mA_n(\bT^n)||\bT^n|^2 + |\bef|^2 + |\nabla \bu^n|^2 +|\partial_t \nabla \bu^n|^2 +|\mA_n(\bT^n)||\bef|^2\dx\dt.
\end{split}
\end{equation}
Since $|\bT^n| |\bG_n(\bT^n)|=|\bT^n\cdot \bG_n(\bT^n)|$, we can use \eqref{aen1}, \eqref{aen2} to deduce that
$$
\int_Q |\bT^n| |\bG_n(\bT^n)| + |\bef|^2 + |\nabla \bu^n|^2 +|\partial_t \nabla \bu^n|^2\dx\dt\le C(\bu_0,\bef).
$$
Thus, it only remains to bound the terms involving $\mA_n$ on the right-hand side of \eqref{sp-r-ln}.
To this end, we note that
$$
\int_Q |\mA_n(\bT^n)||\bT^n|^2 +|\mA_n(\bT^n)||\bef|^2\dx\dt \le C\int_{Q}n^{-1}|\bT^n|^2 + |\bT^n| + |\bef|^2\le C(\bu_0,\bef),
$$
where the last inequality follows from \eqref{aen1} and the assumptions on $\bef$. Using these inequalities for the terms appearing on the right-hand side of \eqref{sp-r-ln}, we immediately deduce that
\begin{equation}\label{sp-r-lnn}
\begin{split}
&\sup_{t\in (\delta,T)} \|\partial_t \nabla \bu^n\|_{L^2(\Omega')} + \sum_{k=1}^d\int_{\delta}^T \int_{\Omega'}(\partial_k \bT^n, \partial_k \bT^n)_{\mA_n(\bT^n)} \dx \dt \le C(\bu_0,\bef,\Omega').
\end{split}
\end{equation}
Similarly, if $\bu_0 \in \mathcal{C}^1([0,T]; W^{1,2}(\Omega;\R^d))$ we can use \eqref{sp-r} and perform similar computations to find that
\begin{equation}\label{sp-rn}
\begin{split}
&\sup_{t\in (0,T)} \|\partial_t \nabla \bu^n\|_{L^2(\Omega')} + \sum_{k=1}^d\int_0^T \int_{\Omega'}(\partial_k \bT^n, \partial_k \bT^n)_{\mA_n(\bT^n)} \dx \dt \le  C(\Omega',\bu_0,\bef).
\end{split}
\end{equation}
Next, we focus on the bounds on the second derivatives of $\partial_t \bu^n$ and $\bu^n$. It follows from \eqref{T-constn} and the Cauchy--Schwarz inequality that
$$
\begin{aligned}
&|\partial_k (\alpha \beps(\bu^n) + \beta\partial_t \beps(\bu^n))|^2=(\partial_k (\alpha \beps(\bu^n) + \beta\partial_t \beps(\bu^n)))\cdot \partial_k \bG_n(\bT^n) \\
&\quad = (\partial_k (\alpha \beps(\bu^n) + \beta\partial_t \beps(\bu^n)), \partial_k \bT^n)_{\mA_n(\bT^n)}\\
&\quad \le (\partial_k (\alpha \beps(\bu^n) + \beta\partial_t \beps(\bu^n)), \partial_k (\alpha \beps(\bu^n) + \beta\partial_t \beps(\bu^n)))_{\mA_n(\bT^n)}^{\frac12} (\partial_k \bT^n, \partial_k \bT^n)_{\mA_n(\bT^n)}^{\frac12}\\
&\quad \le C|\partial_k (\alpha \beps(\bu^n) + \beta\partial_t \beps(\bu^n))|(\partial_k \bT^n, \partial_k \bT^n)_{\mA_n(\bT^n)}^{\frac12}.
\end{aligned}
$$
Therefore,
$$
\begin{aligned}
&|\partial_k (\alpha \beps(\bu^n) + \beta\partial_t \beps(\bu^n))|^2\le C(\partial_k \bT^n, \partial_k \bT^n)_{\mA_n(\bT^n)}.
\end{aligned}
$$
Using this and \eqref{sp-r-lnn}, simple algebraic manipulations imply that
\begin{equation}\label{sp-r-lnn1}
\begin{split}
\int_{\delta}^T \int_{\Omega'} |\nabla(\alpha \beps(\bu^n) + \beta\partial_t \beps(\bu^n))|^2\dx \dt \le C(\bu_0,\bef,\Omega').
\end{split}
\end{equation}

\subsection{Convergence results as $n\to \infty$ based on uniform bounds}
From the uniform bounds \eqref{aen1}, \eqref{aen2}, we see that we can find a subsequence, that we do not relabel, such that
\begin{align}
\bu^n &\rightharpoonup \bu &&\textrm{weakly in }W^{1,2}(0,T; W^{1,2}(\Omega;\R^d)),\label{Kn1}\\
\bu^n &\rightharpoonup^* \bu &&\textrm{weakly$^*$ in } W^{1,\infty}(0,T; L^2(\Omega;\R^d)),\label{Kn2}\\
n^{-1}\bT^n &\to \b0 &&\textrm{strongly in }L^2(0,T; L^2(\Omega; \R^{d\times d})).\label{Kn3}
\end{align}
In addition, using the regularity estimates \eqref{WFfinallocalnn}, \eqref{sp-r-lnn1}, as well as the Aubin--Lions lemma, we deduce that
\begin{align}
\bu^n &\rightharpoonup \bu &&\textrm{weakly in }W^{2,2}_{loc}(0,T; L^{2}(\Omega;\R^d)),\label{Kn4}\\
\bu^n &\rightharpoonup \bu &&\textrm{weakly in } W^{1,2}_{loc}(0,T; W^{2,2}_{loc}(\Omega;\R^d)),\label{Kn5}\\
\bu^n &\to \bu &&\textrm{strongly in }W^{1,2}_{loc}(0,T; W^{1,2}_{loc}(\Omega; \R^{d})).\label{Kn6}
\end{align}
Next, we focus on the limiting passage in \eqref{T-constn}. Since the mapping $\bG$ is bounded, we know that
\begin{align}
\bG(\bT^n) &\rightharpoonup^* \overline{\bG} \qquad \textrm{weakly$^*$ in }L^{\infty}(Q;\R^{d\times d}). \label{Kn7}
\end{align}
Our goal is to identify $\overline{\bG}$. We first note that from \eqref{T-constn}, \eqref{Kn2} and \eqref{Kn3} we must have
\begin{equation}
\overline{\bG}=\alpha \beps(\bu) + \beta \partial_t \beps(\bu) \qquad \textrm{a.e. in }Q. \label{aennn}
\end{equation}
Next, we want to show that there exists a $\tilde{\bT}$ such that $\overline{\bG}=\bG(\tilde{\bT})$. To do so, we appeal to Chacon's biting lemma to deduce from \eqref{aen1} that there exists a $\bT\in L^1(Q; \R^{d\times d})$ and a nondecreasing sequence of sets $Q_1\subset Q_2\subset \cdots$, with $|Q\setminus Q_i|\to 0$ as $i\to \infty$, such that  for each $i\in \mathbb{N}$ there holds
\begin{align}
\bT^n &\rightharpoonup \bT \qquad\textrm{weakly in }L^{1}(Q_i;\R^{d\times d}).\label{Kn8}
\end{align}
However, thanks to \eqref{Kn6}, \eqref{aennn} and Egoroff's theorem, we also know that for every $\varepsilon>0$ and every $i\in \mathbb{N}$ there exists a $Q_{i,\varepsilon}\subset Q_i$, with $|Q_i\setminus Q_{i,\varepsilon}|\le \varepsilon$, such that
$$
\alpha \beps(\bu^n) + \beta \partial_t\beps(\bu^n)\to \overline{\bG}\qquad \textrm{strongly in } L^{\infty}(Q_{i,\varepsilon};\mathbb{R}^{d\times d}).
$$
Therefore, using the monotonicity of $\bG$ and the above convergence result, we deduce that, for an arbitrary $\bW\in L^1(Q;\R^{d\times d})$, that
$$
\begin{aligned}
0&\le \lim_{n\to \infty}\int_{Q_{i,\varepsilon}}(\bG(\bT^n)-\bG(\bW))\cdot (\bT^n-\bW)\dx \dt\\
&=   \int_{Q_{i,\varepsilon}}\bG(\bW)\cdot (\bW-\bT)-\overline{\bG}\cdot \bW\dx \dt+ \lim_{n\to \infty}\int_{Q_{i,\varepsilon}}\bG(\bT^n)\cdot \bT^n\dx \dt\\
&\le    \int_{Q_{i,\varepsilon}}\bG(\bW)\cdot (\bW-\bT)-\overline{\bG}\cdot \bW\dx \dt+ \lim_{n\to \infty}\int_{Q_{i,\varepsilon}}\bG_n(\bT^n)\cdot \bT^n\dx \dt\\
&=    \int_{Q_{i,\varepsilon}}\bG(\bW)\cdot (\bW-\bT)-\overline{\bG}\cdot \bW\dx \dt+ \lim_{n\to \infty}\int_{Q_{i,\varepsilon}}(\alpha \beps(\bu^n) + \beta \partial_t\beps(\bu^n))\cdot \bT^n\dx \dt\\
&=\int_{Q_{i,\varepsilon}}(\overline{\bG}-\bG(\bW))\cdot (\bT-\bW)\dx \dt.
\end{aligned}
$$
Since $\bG$ is a monotone mapping and $\bW$ is arbitrary, we can use Minty's method to deduce that
$$
\overline{\bG}=\bG(\bT) \qquad \textrm{ a.e. in }Q_{i,\varepsilon}.
$$
Recalling that $\varepsilon>0$ and $i\in \mathbb{N}$ are arbitrary, we see that \eqref{T-const2} follows from \eqref{aennn} and the above identity. Additionally, setting $\bW:=\bT$ in the above and using the fact that $\overline{\bG}=\bG(\bT)$, we see that
$$
\begin{aligned}
\lim_{n\to \infty}\int_{Q_{i,\varepsilon}}|(\bG(\bT^n)-\bG(\bT))\cdot (\bT^n-\bT)|\dx \dt= \lim_{n\to \infty}\int_{Q_{i,\varepsilon}}(\bG(\bT^n)-\bG(\bT))\cdot (\bT^n-\bT)\dx \dt=0.
\end{aligned}
$$
Consequently, we must have that
$$
\bT^n \to \bT \qquad \textrm{ a.e. in } Q_{i,\varepsilon},
$$
as a result of the strict monotonicity of $\bG$. However, as before, since $\varepsilon>0$ and $i\in \mathbb{N}$ are arbitrary, we deduce that
\begin{align}
\bT^n \to \bT \qquad \textrm{ a.e. in } Q. \label{Kn9}
\end{align}
Using \eqref{aen1}, \eqref{Kn9} and Fatou's lemma, it follows that
\begin{equation}
\int_{Q}|\bT|\dx \dt \le C(\bu_0,\bef).\label{aest1nl}
\end{equation}

Next, we focus on the boundary and initial conditions for $\bu$. It is evident from the convergence result \eqref{Kn1}, combined with the fact that $\bu^n=\bu_0$ on $\Gamma$ and  $\bu^n(0)=\bu_0$ on $\Omega$, that we must have $\bu=\bu_0$ on $\Gamma$ as well. Furthermore, it follows that
$$
\|\bu(t)-\bu_0(0)\|_{1,2} \to 0 \qquad \textrm{ as } t\to 0_+.
$$
Concerning the attainment of the initial condition for $\partial_t\bu(0)$ we need to proceed slightly differently since we have  control on $\partial_{tt}\bu$  locally in $(0,T)$.  We integrate \eqref{test3n} over a time interval $(0,t)$, where $0<t<T$, and since we know that for each $n$ the initial datum is attained we deduce that
\begin{equation}\label{test3naa}
\begin{split}
 &\frac{1}{4} \int_{\Omega}\beta|\partial_{t} (\bu^n-\bu_0)(t)|^2 +\beta\left|\partial_{t} (\bu^n-\bu_0)(t)+\frac{2\alpha}{\beta}(\bu^n-\bu_0)(t)\right|^2\dx \\
 &=\int_0^t\int_{\Omega} \bT^n\cdot ((\alpha \beps(\bu_0)+\beta\partial_t \beps(\bu_0))-\bG_n(\bT^n))  +\alpha  |\partial_{t} (\bu^n-\bu_0)|^{2}\dx \dtau\\
 & \quad+\int_0^t\int_{\Omega}(\bef-\partial_{tt} \bu_0) \cdot (\alpha(\bu^n-\bu_0)+ \beta\partial_t (\bu^n-\bu_0))+ \frac{2\alpha^2}{\beta}\partial_t (\bu^n-\bu_0) \cdot (\bu^n-\bu_0)\dx\dtau.
 \end{split}
\end{equation}
Our goal is to let $n\to \infty$. Since $t>0$, we can use the ``local" convergence result \eqref{Kn4} to let $n\to \infty$ in the left-hand side of \eqref{test3naa}. To bound  also the right-hand side, we first use the safety strain condition \eqref{compt}, which implies that there exists a $\bT_0\in L^1(Q;\R^{d\times d})$ such that
$$
\alpha \beps(\bu_0)+\beta\partial_t \beps(\bu_0)=\bG(\bT_0)\qquad \textrm{ a.e. in }Q.
$$
Thus, using the monotonicity of $\bG$, we see that
$$
\begin{aligned}
\bT^n\cdot ((\alpha \beps(\bu_0)+\beta\partial_t \beps(\bu_0))-\bG_n(\bT^n))&\le \bT^n\cdot (\bG(\bT_0)-\bG(\bT^n))\le \bT_0\cdot (\bG(\bT_0)-\bG(\bT^n)).
\end{aligned}
$$
Using the convergence results \eqref{Kn1}--\eqref{aennn} applied to all terms in \eqref{test3naa} with the above inequality  yields the following:
\begin{equation}\label{test3naab}
\begin{split}
 &\frac{1}{4} \int_{\Omega}\beta|\partial_{t} (\bu-\bu_0)(t)|^2 +\beta\left|\partial_{t} (\bu-\bu_0)(t)+\frac{2\alpha}{\beta}(\bu-\bu_0)(t)\right|^2\dx \\
 &\le\int_0^t\int_{\Omega} \bT_0\cdot ((\alpha \beps(\bu_0)+\beta\partial_t \beps(\bu_0))-\bG(\bT))  +\alpha  |\partial_{t} (\bu-\bu_0)|^{2}\dx\dtau\\
 & \quad +\int_0^t\int_{\Omega}(\bef-\partial_{tt} \bu_0) \cdot (\alpha(\bu-\bu_0)+ \beta\partial_t (\bu-\bu_0))+ \frac{2\alpha^2}{\beta}\partial_t (\bu-\bu_0) \cdot (\bu-\bu_0)\dx\dtau\\
 &\le C\int_0^t \|\bT_0\|_1 +\|\bef\|_2 +\|\partial_{tt}\bu_0\|_2 +1 \dtau.
 \end{split}
\end{equation}
Letting  $t\to 0_+$, we see that
$$
\lim_{t\to 0_+}(\|\bu(t)-\bu_0(0)\|_2^2 +\|\partial_t\bu(t)-\partial_t\bu_0(0)\|^2_2)=0.
$$
In addition, it also follows from \eqref{Kn4} that $\bu\in \mathcal{C}^{1}_{loc}(0,T; L^2(\Omega;\R^d))$, which combined with the above result gives that  $\bu\in \mathcal{C}^{1}([0,T]; L^2(\Omega;\R^d))$.

\subsection{Validity of the equation in the limit}
To summarize the results so far, we have found a couple $(\bu,\bT)$ that satisfies \eqref{FSu}--\eqref{FST2} and \eqref{T-const2}, \eqref{bcint2}. It remains to show \eqref{WF2}. To do so, we use the method developed in \cite{BeBuMaSu17}. Let $g$ be a smooth nonnegative nonincreasing  function satisfying
$$
g(s)=\left\{\begin{aligned}
&1, &&\textrm{for }s\in [0,1],\\
&0, &&\textrm{for }s>2.
\end{aligned}
\right.
$$
For each \( k \in \mathbb{N}\),  let us define
$$
g_k(s):=g(s/k).
$$
It is clear that $g_k\nearrow 1$. Next let $\bv\in \mathcal{C}^{\infty}_0(Q; \R^d)$ be arbitrary but fixed. Thanks to \eqref{Kn4} and \eqref{Kn9}, all terms in \eqref{WF2} are well-defined for almost all $t\in (0,T)$ and we just need to check that the equality holds.

Using the properties of $g_k$, we have
\begin{equation}\label{last1}
\begin{split}
I&:=\int_Q \partial_{tt}\bu \cdot \bv+ \bT \cdot \nabla \bv - \bef \cdot \bv \dx\dt \\
&= \lim_{k\to \infty}\int_Q \partial_{tt}\bu \cdot \bv g_k(|\bT|)+ \bT \cdot \nabla \bv g_k(|\bT|) - \bef \cdot \bv g_k(|\bT|) \dx\dt.
\end{split}
\end{equation}
Using \eqref{Kn4}, \eqref{Kn8}, the fact that $\bT^n\in L^2_{loc}(0,T; W^{1,2}_{loc}(\Omega; \R^{d\times d}))$, which follows from \eqref{sp-r-lnn}, and the fact that $g_k(|\bT^n|)$ is supported only in the set where $|\bT^n|\le k$, we can rewrite the right-hand side of \eqref{last1} in the following way:
  \begin{equation}\label{last2}
\begin{split}
I&= \lim_{k\to \infty}\lim_{n\to \infty}\int_Q \partial_{tt}\bu^n \cdot \bv g_k(|\bT^n|)+ \bT^n \cdot \nabla \bv g_k(|\bT^n|) - \bef \cdot \bv g_k(|\bT^n|) \dx\dt\\
&= \lim_{k\to \infty}\lim_{n\to \infty}\int_Q \partial_{tt}\bu^n \cdot \bv g_k(|\bT^n|)+ \bT^n \cdot \nabla (\bv g_k(|\bT^n|)) - \bef \cdot \bv g_k(|\bT^n|) \dx\dt\\
&\quad -\lim_{k\to \infty}\lim_{n\to \infty}\int_Q  \bT^n \cdot (\nabla g_k(|\bT^n|)\otimes \bv )\dx \dt\\
&=-\lim_{k\to \infty}\lim_{n\to \infty}\int_Q  \bT^n \cdot (\nabla g_k(|\bT^n|)\otimes \bv )\dx \dt,
\end{split}
\end{equation}
where for the last equality we have used \eqref{WFn} with $\bw:=\bv g_k(|\bT^n|)$. It remains to show that the right-hand side of \eqref{last2} vanishes. We define
$$
M_{k,n}(s):= \int_0^s \frac{g'_k(t)}{\frac{\phi'(t)}{t}+n^{-1}}\dt \le \int_0^s \frac{t g'_k(t)}{\phi'(t)}\dt=:M_k(s).
$$
Then, using that $|g_k'(s)|\le Cs^{-1} \chi_{\{s\in (k,2k)\}}$, we see that
\begin{equation}\label{prym}
M_k(s)\left\{\begin{aligned}&\le C\min\{s,k\} &&\textrm{for all } s\ge 0,\\
&=0 &&\textrm{for }s\le k.\end{aligned}\right.
\end{equation}
Next we can use the structural assumption \eqref{A4} to rewrite the term under the limit in \eqref{last2} as
\begin{equation}\label{smus}
\begin{aligned}
-\int_Q&  \bT^n \cdot (\nabla g_k(|\bT^n|)\otimes \bv )\dx \dt \\&= -\int_Q  \bG_n(\bT^n) \cdot (\nabla |\bT^n|\otimes \bv )\frac{g'_k(|\bT^n|)}{\frac{\phi'(|\bT^n|)}{|\bT^n|}+n^{-1}}\dx \dt\\
&= -\int_Q  \bG_n(\bT^n) \cdot (\nabla M_{k,n}(|\bT^n|)\otimes \bv )\dx \dt\\
&= \int_Q  \diver \bG_n(\bT^n) \cdot \bv  M_{k,n}(|\bT^n|) \dx \dt+\int_Q  \bG_n(\bT^n) \cdot \nabla \bv  M_{k,n}(|\bT^n|)\dx \dt.
\end{aligned}
\end{equation}
For the first term on the right-hand side of (\ref{smus}), we use the definition of $\mA_n$ alongside the Cauchy--Schwarz inequality to obtain
$$
\begin{aligned}
&| \diver \bG_n(\bT^n) \cdot \bv  M_{k,n}(|\bT^n|) |=\left|\sum_{i,j,a,b=1}^d (\mA_n(\bT^n))^{ij}_{ab} \partial_j \bT_{ab}^n \bv_i M_{k,n}(|\bT^n|) \right|\\
&=\left|\sum_{m=1}^d \sum_{i,j,a,b=1}^d (\mA_n(\bT^n))^{ij}_{ab} \partial_m \bT_{ab}^n \delta_{mj}\bv_i M_{k,n}(|\bT^n|) \right|\\
&\le \left|\sum_{m=1}^d \left(\partial_m \bT^n, \partial_m \bT^n\right)^{\frac12}_{\mA_n(\bT^n)}\left(\sum_{i,j,a,b=1}^d (\mA_n(\bT^n))^{ij}_{ab} \delta_{mj}\bv_i\delta_{ma}\bv_b M^2_{k,n}(|\bT^n|)\right)^{\frac12} \right|\\
&\le \left|\sum_{m=1}^d \left(\partial_m \bT^n, \partial_m \bT^n\right)^{\frac12}_{\mA_n(\bT^n)}\left((n^{-1}+\frac{C}{1+|\bT^n|})|\bv|^2 M^2_{k,n}(|\bT^n|)\right)^{\frac12} \right|.
\end{aligned}
$$
Using this bound in \eqref{smus} and then in \eqref{last2}, recalling the fact that $\bv$ is compactly supported, we deduce with the help of H\"{o}lder's inequality and the uniform bound \eqref{sp-r-ln} that
 \begin{equation}\label{last12}
\begin{split}
|I|&\le \lim_{k\to \infty}\lim_{n\to \infty} \int_Q \left|\sum_{m=1}^d \left(\partial_m \bT^n, \partial_m \bT^n\right)^{\frac12}_{\mA_n(\bT^n)}\left(\left( n^{-1}+\frac{C}{1+|\bT^n|}\right) |\bv|^2 M^2_{k,n}(|\bT^n|)\right)^{\frac12} \right| \dx\dt\\
&\le C(\bv)\lim_{k\to \infty}\lim_{n\to \infty} \left(\int_Q \left(n^{-1}+\frac{C}{1+|\bT^n|} \right) M^2_{k,n}(|\bT^n|) \dx\dt\right)^{\frac12}\\
&=C(\bv)\lim_{k\to \infty} \left(\int_Q \frac{ M^2_{k}(|\bT|)}{|\bT|} \dx\dt\right)^{\frac12},
\end{split}
\end{equation}
where for the last equality we used \eqref{Kn9} and the boundedness of $M_k$. Consequently,  using that $\bT\in L^1(Q;\R^{d\times d})$ and  the structure of $M_k$ \eqref{prym}, we deduce that
\begin{equation*}
\begin{split}
|I|&\le C(\bv)\lim_{k\to \infty} \left(\int_Q \frac{ M^2_{k}(|\bT|)}{|\bT|} \dx\dt\right)^{\frac12}\le C(\bv)\lim_{k\to \infty} \left(\int_{Q\cap \{|\bT|>k\}} |\bT| \dx\dt\right)^{\frac12}=0.
\end{split}
\end{equation*}
Since $\bv$ is arbitrary, we see that \eqref{WF2} holds for almost all $t\in (0,T)$ and all smooth compactly supported $\bw$. Finally, using a weak$^*$ density argument based on \cite[Lemma A.3]{BeBuMaSu17} we deduce that \eqref{WF2} holds for an arbitrary $\bw\in W^{1,2}_0(\Omega,\R^d)$ fulfilling $\beps(\bw)\in L^{\infty}(Q;\R^{d\times d})$. This concludes the proof of the existence of a solution as asserted in Theorem~\ref{T4.1}.

\subsection{Uniqueness of the solution}
It remains to prove the uniqueness of such weak solutions.
Let  $(\bu_1,\bT_1)$ and $(\bu_2,\bT_2)$ be two solutions emanating from the same data and denote $\bv:=\bu_1-\bu_2$. Then it follows from \eqref{WF2} that, for almost all $t\in (0,T)$  and for every $\bw\in W^{1,\infty}_0(\Omega;\mathbb{R}^d)$,
\begin{equation}\label{uniq1}
\int_{\Omega} \partial_{tt} \bv \cdot \bw + (\bT_1-\bT_2)\cdot \beps(\bw)\dx =0.
\end{equation}
Since $\partial_t \beps(\bv)$ and $\beps(\bv)$ belong to $L^{\infty}(\Omega;\R^{d\times d})$ for almost all $t\in (0,T)$, we can again use the weak$^*$ density argument as in the previous section to deduce that \eqref{uniq1} holds with $\bw:=\alpha \bv + \beta \partial_t \bv$. Consequently, since we have
$$
\alpha \bv + \beta \partial_t \bv=\bG(\bT_1)-\bG(\bT_2),
$$
we can use the monotonicity of $\bG$ and  integration over $(t_0, t)$, with $0<t_0<t < T$, to deduce from \eqref{uniq1} that
$$
\begin{aligned}
0&\ge 2\int_{t_0}^t \int_{\Omega}\partial_{tt} \bv \cdot (\alpha \bv + \beta \partial_t \bv)\dx \dtau \\
&\quad= \beta \int_{\Omega}|\partial_t \bv(t)|^2 - |\partial_t \bv(t_0)|^2 +2\alpha \partial_t \bv(t)\cdot \bv(t) - 2\alpha \bv(t_0)\cdot \bv(t_0)\dx -2\alpha \int_{t_0}^t\int_{\Omega} |\partial_t \bv|^2\dx \dtau.
\end{aligned}
$$
We note that this procedure is rigorous for every such $t_0>0$ thanks to the regularity of $\bu_1$ and $\bu_2$ asserted in \eqref{FSu2}.
Since $\bv\in \mathcal{C}^1([0,T]; L^2(\Omega;\R^d))$ as a result of  \eqref{FSu2}, we can use \eqref{bcint2} and let $t_0\to 0_+$ in the above inequality in order to deduce that
$$
\begin{aligned}
0&\ge \beta \int_{\Omega}|\partial_t \bv(t)|^2  +2\alpha \partial_t \bv(t)\cdot \bv(t) \dx -2\alpha \int_{0}^t\int_{\Omega} |\partial_t \bv|^2\dx \dtau\\
&=\beta \int_{\Omega}|\partial_t \bv(t)|^2  +2\alpha \partial_t \bv(t)\cdot\left( \int_0^t\partial_t \bv(\tau)\dtau\right) \dx -2\alpha \int_{0}^t\int_{\Omega} |\partial_t \bv|^2\dx \dtau\\
&\ge \frac{\beta}{2} \left(\|\partial_t \bv (t)\|_2^2 - C(\alpha,\beta,T)\int_0^t \|\partial_t \bv(\tau)\|_2^2 \dtau\right)
\\&= \mathrm{e}^{-tC(\alpha,\beta,T)}\ddt \left( \mathrm{e}^{-tC(\alpha,\beta,T)}\int_0^t \|\partial_t \bv(\tau)\|_2^2 \dtau\right).
\end{aligned}
$$
Simple integration with respect to $t$ then gives that $\partial_t \bv \equiv 0$ almost everywhere in $Q$ and consequently $\bu_1=\bu_2$. By strict monotonicity, we  necessarily also have that $\bT_1=\bT_2$ almost everywhere in $Q$ and, hence, uniqueness follows.

\bibliographystyle{plainnat}
\bibliography{bibliography}

\begin{thebibliography}{15}
\providecommand{\natexlab}[1]{#1}
\providecommand{\url}[1]{\texttt{#1}}
\expandafter\ifx\csname urlstyle\endcsname\relax
  \providecommand{\doi}[1]{doi: #1}\else
  \providecommand{\doi}{doi: \begingroup \urlstyle{rm}\Url}\fi

\bibitem[Beck et~al.(2017)Beck, Bul\'{\i}\v{c}ek, M\'{a}lek, and
  S\"{u}li]{BeBuMaSu17}
L.~Beck, M.~Bul\'{\i}\v{c}ek, J.~M\'{a}lek, and E.~S\"{u}li.
\newblock On the existence of integrable solutions to nonlinear elliptic
  systems and variational problems with linear growth.
\newblock \emph{Arch. Ration. Mech. Anal.}, 225\penalty0 (2):\penalty0
  717--769, 2017.
\newblock ISSN 0003-9527.
\newblock \doi{10.1007/s00205-017-1113-4}.
\newblock URL \url{https://doi.org/10.1007/s00205-017-1113-4}.

\bibitem[Beck et~al.(2020)Beck, Bul\'{\i}\v{c}ek, and F.]{BeBuGm20}
L.~Beck, M.~Bul\'{\i}\v{c}ek, and Gmeineder F.
\newblock On a {N}eumann problem for variational functionals of linear growth.
\newblock Accepted for publication in \emph{Annali della Scuola Normale
  Superiore di Pisa, Classe di Scienze}, {DOI}: 10.2422/2036-2145.201802\_005,
  2020.

\bibitem[Bul\'{\i}\v{c}ek et~al.(2012)Bul\'{\i}\v{c}ek, M\'{a}lek, and
  Rajagopal]{BuMaRa12}
M.~Bul\'{\i}\v{c}ek, J.~M\'{a}lek, and K.~R. Rajagopal.
\newblock On {K}elvin--{V}oigt model and its generalizations.
\newblock \emph{Evol. Equ. Control Theory}, 1\penalty0 (1):\penalty0 17--42,
  2012.
\newblock ISSN 2163-2472.
\newblock \doi{10.3934/eect.2012.1.17}.
\newblock URL \url{https://doi.org/10.3934/eect.2012.1.17}.

\bibitem[Bul\'{\i}\v{c}ek et~al.(2013)Bul\'{\i}\v{c}ek, Kaplick\'{y}, and
  Steinhauer]{BuKaSt13}
M.~Bul\'{\i}\v{c}ek, P.~Kaplick\'{y}, and M.~Steinhauer.
\newblock On existence of a classical solution to a generalized
  {K}elvin--{V}oigt model.
\newblock \emph{Pacific J. Math.}, 262\penalty0 (1):\penalty0 11--33, 2013.
\newblock ISSN 0030-8730.
\newblock \doi{10.2140/pjm.2013.262.11}.
\newblock URL \url{https://doi.org/10.2140/pjm.2013.262.11}.

\bibitem[Bul\'{\i}\v{c}ek et~al.(2014)Bul\'{\i}\v{c}ek, M\'{a}lek, Rajagopal,
  and S\"uli]{BuMaRaSu}
M.~Bul\'{\i}\v{c}ek, J.~M\'{a}lek, K.~R. Rajagopal, and E.~S\"uli.
\newblock On elastic solids with limiting small strain: Modelling and analysis.
\newblock \emph{EMS Surv. Math. Sci.}, 1\penalty0 (2):\penalty0 283--332, 2014.

\bibitem[Bul\'{\i}\v{c}ek et~al.(2020)Bul\'{\i}\v{c}ek, Patel, \c{S}eng\"{u}l,
  and S\"{u}li]{BuPaSeSu20}
M.~Bul\'{\i}\v{c}ek, V.~Patel, Y.~\c{S}eng\"{u}l, and E.~S\"{u}li.
\newblock Existence of large-data global weak solutions to a model of a
  strain-limiting viscoelastic body.
\newblock Submitted. arXiv:2011.07490, 2020.

\bibitem[\c{S}eng\"{u}l(2020{\natexlab{a}})]{Sengul-strain-lim-review}
Y.~\c{S}eng\"{u}l.
\newblock Viscoelasticity with limiting strain.
\newblock \emph{Disc. Cont. Dyn. Syst. S}, 2020{\natexlab{a}}.
\newblock \doi{10.3934/dcdss.2020330}.

\bibitem[\c{S}eng\"{u}l(2020{\natexlab{b}})]{Sengul-visco-review}
Y.~\c{S}eng\"{u}l.
\newblock Nonlinear viscoelasticity of strain-rate type: an overview.
\newblock {S}ubmitted, 2020{\natexlab{b}}.

\bibitem[Erbay and \c{S}eng\"ul(2015)]{Erbay-Sengul}
H.~A. Erbay and Y.~\c{S}eng\"ul.
\newblock Traveling waves in one-dimensional nonlinear models of
  strain-limiting viscoelasticity.
\newblock \emph{Int. J. Nonlinear Mech.}, 77:\penalty0 61--68, 2015.

\bibitem[Erbay and \c{S}eng\"{u}l(2020)]{ErSe20}
H.~A. Erbay and Y.~\c{S}eng\"{u}l.
\newblock A thermodynamically consistent stress-rate type model of
  one-dimensional strain-limiting viscoelasticity.
\newblock \emph{Z. Angew. Math. Phys.}, 71\penalty0 (3):\penalty0 94, 2020.
\newblock ISSN 0044-2275.
\newblock \doi{10.1007/s00033-020-01315-7}.
\newblock URL \url{https://doi.org/10.1007/s00033-020-01315-7}.

\bibitem[Erbay et~al.(2020)Erbay, Erkip, and \c{S}eng\"{u}l]{ErErSe}
H.~A. Erbay, A.~Erkip, and Y.~\c{S}eng\"{u}l.
\newblock Local existence of solutions to the initial-value problem for
  one-dimensional strain-limiting viscoelasticity.
\newblock \emph{J. Differential Equations}, 269:\penalty0 9720--9739, 2020.

\bibitem[Rajagopal(2003)]{Raj-03}
K.~R. Rajagopal.
\newblock On implicit constitutive theories.
\newblock \emph{Appl. Math.}, 48:\penalty0 279--319, 2003.

\bibitem[Rajagopal(2006)]{Raj-06}
K.~R. Rajagopal.
\newblock On implicit constitutive theories for fluids.
\newblock \emph{J. Fluid Mech.}, 550:\penalty0 243--249, 2006.

\bibitem[Rajagopal(2010)]{Raj-10}
K.~R. Rajagopal.
\newblock On new class of models in elasticity.
\newblock \emph{J. Math. Comp. Appl.}, 15\penalty0 (4):\penalty0 506--528,
  2010.

\bibitem[Rajagopal and Saccomandi(2014)]{RaSa14}
K.~R. Rajagopal and G.~Saccomandi.
\newblock Circularly polarized wave propagation in a class of bodies defined by
  a new class of implicit constitutive relations.
\newblock \emph{Z. Angew. Math. Phys.}, 65\penalty0 (5):\penalty0 1003--1010,
  2014.
\newblock ISSN 0044-2275.
\newblock \doi{10.1007/s00033-013-0362-9}.
\newblock URL
  \url{https://ezproxy-prd.bodleian.ox.ac.uk:2095/10.1007/s00033-013-0362-9}.

\end{thebibliography}

\end{document}